\newcommand{\beq}{\begin{eqnarray*}}
\newcommand{\feq}{\end{eqnarray*}}
\newcommand{\beqn}{\begin{eqnarray}}
\newcommand{\feqn}{\end{eqnarray}}
\newcommand{\RN}[1]{%
  \textup{\uppercase\expandafter{\romannumeral#1}}%
}
\newtheorem{theorem}{Theorem}[section]
\newtheorem{lemma}[theorem]{Lemma}
\newtheorem{proposition}[theorem]{Proposition}
\theoremstyle{definition}
\theoremstyle{remark}
\newtheorem{remark}[theorem]{Remark}
\numberwithin{equation}{section}
\newtheorem*{theorem*}{Theorem}
\begin{document}
\title[Critical thresholds in hyperbolic systems]{Critical thresholds in a nonlocal Euler 
system with relaxation}

\author{Manas Bhatnagar and Hailiang Liu}
\address{Department of Mathematics, Iowa State University, Ames, Iowa 50010}
\email{manasb@iastate.edu}
\email{hliu@iastate.edu} 
\keywords{Critical thresholds, global regularity, shock formation, hyperbolic systems}
\subjclass{Primary, 35L65; Secondary, 35B30} 
\begin{abstract} 
We propose and study a nonlocal Euler system with relaxation, which tends to a strictly hyperbolic system under the hyperbolic scaling limit. An independent proof of the local existence and uniqueness of this system is presented in any spatial dimension.   We further derive a precise critical threshold for this system in one dimensional setting. Our result reveals that such nonlocal system admits global smooth solutions for a large class of initial data. Thus, the nonlocal velocity regularizes the generic finite-time breakdown in the pressureless Euler system.  
\end{abstract}

\maketitle


\section{Introduction} \label{prob} 
The question of global smoothness vs. finite time breakdown is fundamental for many hyperbolic balance laws, and  
it was studied in terms of critical threshold phenomena for the first time in \cite{ELT01} for Euler-Poisson equations, followed by critical threshold analysis on various hyperbolic balance laws, see, e.g., 
 \cite{BL19, CCZ16, LL08, LL09j,LL09, LT03, LT04, TT14, WTB12}.
   
In this paper we are concerned with the critical threshold phenomena in 
nonlocal Euler equations with relaxation,  
\begin{subequations}
\label{hypMainwic}
\begin{align}
\label{hypMain}
\begin{aligned}
& \rho_t +\nabla \cdot (\rho Q\ast \mathbf{u})=0,\; x\in \mathbb{R}^N, \; t>0,\\
&  \mathbf{u}_t + (\mathbf{u}\cdot \nabla)\mathbf{u}=\rho(Q\ast \mathbf{u}-\mathbf{u}),\\
\end{aligned}
\end{align}
subject to initial density and velocity, 
\begin{align}
\label{hypMain2}
\begin{aligned}
(\rho (0,x), \mathbf{u}(0,x))= (\rho_0 (x),  \mathbf{u_0} (x)),
\end{aligned}
\end{align}
\end{subequations}
where $\rho_0 \geq 0$.
The nonlocal forces appear in two places in the system, one in the flux for density, and the other in the relaxation for velocity. 
In this context, $Q:\mathbb{R}^N\longrightarrow \mathbb{R}^+\cup \{ 0\}$ is the interaction function, which is assumed 
to be symmetric with bounded total variation in the whole domain.

In one dimension, this system may be seen as a refinement of the convolutional conservation law,
$$
u_t + (u^2/2)_x = Q\ast u-u, 
$$  
for which the critical threshold phenomenon was studied by Liu and Tadmor in \cite{LT01}. Due to the nonlocal nature of this convolution model, only upper and lower thresholds were identified in \cite{LT01}. It remains an open problem whether a sharp threshold can be explicitly obtained for this scalar model.   
The main contribution of this paper is to give a sharp critical threshold for system (\ref{hypMain}) for $N=1$. The multi-case is still an open problem.

To motivate the model, we keep continuous flows in mind and begin with a general physical process and assume that the density  transport  is governed by 
\begin{align}\label{rv}
\rho_t +\nabla\cdot(\rho \mathbf{v})=0.
\end{align}
Here $\mathbf{v} \in \mathbb{R}^N$ represents a mean velocity field. If $\mathbf{v}$ is given in terms of the density variable $\rho$, then  (\ref{rv}) becomes closed.  For example, $\mathbf{v}=-\nabla \delta_\rho E[\rho]$, with certain free energy functional $E[\rho]$, leads to a class of gradient flows in density space, including the heat equation ($E[\rho]=\int \rho {\rm log}\rho dx$), the Fokker-Planck equation ($E[\rho]=\int(\rho {\rm log}\rho +V(x)\rho) dx$), and drift-diffusion models, see, e.g,  \cite{PRS90} in the context of semi-conductor modeling.  In this case, the system is considered to be in local equilibrium and $\mathbf{v}$ is referred to as equilibrium velocity. However, very often $\mathbf{v}$ depends on some extra variables in addition to the conserved ones. The extra variable may be used to characterize non-equilibrium 
features of the system under consideration. Choosing a suitable non-equilibrium variable and determining its evolution equation are the fundamental task of irreversible thermodynamics \cite{GM62, Wa19}. In this paper,  we consider $\mathbf{u}$ as the extra velocity variable and  assume it satisfy the equation involving both nonlinear convection and relaxation:
$$
 \mathbf{u}_t + (\mathbf{u}\cdot \nabla)\mathbf{u}=\rho( \mathbf{v}-\mathbf{u}).
$$
To close the system, we need to relate $\mathbf{v}$ to $\mathbf{u}$, here we choose to use a weighted averaging as
$$
\mathbf{v}=Q\ast \mathbf{u}.
$$  
Such nonlocal structure (non-local velocity) has an analogy with some nonlocal models for 
fluid flows (see, e.g., \cite{FV16, RLM96}). In addition to these modeling considerations,  (\ref{hypMainwic}) has a mathematical interest of its own due to its non-local structure and critical threshold behavior with respect to existence of global solutions vs finite time break-down. 

To be more precise, let us assume some natural properties for the interaction function $Q \geq 0 $: 
\begin{align}
\label{qhyp}
\begin{aligned}
& Q\in W^{1,1}(\mathbb{R}^N),\\
& Q = Q(|x-y|) \; \text{for any}\ x,y\in\mathbb{R}^N \text{and}\ \frac{\partial Q}{\partial r}\leq 0,\ \text{for}\ r> 0, \\
& \int_{\mathbb{R}^N}\!\! Q(x)\, dx = 1.
\end{aligned}
\end{align}
We point out that throughout this paper, any space variable, for example $x,y$, is a vector in $\mathbb{R}^N$, i.e., $x=(x_1,x_2,\ldots ,x_N)$.
To understand the effect of the nonlocal terms involved in the system, we make a hyperbolic scaling
$$
(t,x)\longrightarrow \left( \frac{t}{\epsilon}, \frac{x}{\epsilon}\right), \quad \epsilon > 0,
$$
which leads to a rescaled system of form 
\begin{align}
\begin{aligned}
\label{scale}
& \rho_t + \nabla\cdot (\rho Q^\epsilon\ast \mathbf{u}) = 0,\\
& \mathbf{u}_t + (\mathbf{u}\cdot\nabla ) \mathbf{u} = \frac{\rho}{\epsilon}(Q^\epsilon\ast \mathbf{u}-\mathbf{u}),
\end{aligned}
\end{align}
where $Q^\epsilon(x) = Q(x/\epsilon)/\epsilon^N$ is converging to the delta function $\delta(x) = \Pi_{i=1}^N \delta (x_i)$ 
as the scale parameter  $\epsilon$ tends to zero.
A formal asymptotic analysis in Section \ref{sec2.1} shows that \eqref{scale} tends to a local hyperbolic system
of  form 
\begin{align}
\begin{aligned}
\label{localsys}
& \rho_t +\nabla\cdot (\rho \mathbf{u})=0,\\
&  \mathbf{u}_t + \left( (\mathbf{u} - \rho \boldsymbol\gamma )\cdot\nabla \right) \mathbf{u}=0,
\end{aligned}
\end{align}
where $\boldsymbol\gamma= \int_{\mathbb{R}^N} y Q(y) dy$ is a constant.
Let us illustrate the critical threshold for \eqref{localsys} in 1D case. In such case, it is a strictly hyperbolic system with distinct characteristic speeds $\lambda_1 = u$ and $\lambda_2 = u - \gamma\rho$.  By using the method introduced in 
 \cite{Lax64} to deal with pairs of conservation laws, it can be shown that (\ref{localsys}) will lose $C^1$ smoothness due to the appearance of shock discontinuities unless its two Riemann invariants are nondecreasing, that is 
\begin{align}\label{gg}
u_{0x}(x)+\gamma \rho_{0x}(x)\geq 0 \quad \text{and} \quad u_{0x}(x)\geq 0,\ \forall x\in\mathbb{R}
\end{align}
for (\ref{localsys}). Thus, the finite-time breakdown of (\ref{localsys}) is generic in the sense that it holds for all but a “small” set of initial data.
Such finite time shock formation result also holds true for multi-dimensional hyperbolic conservation law systems under some structural conditions, see \cite[Theorem 7.8.2]{Da16}.  
On the other hand, an additional forcing  if presented in the system will often provide a delicate balance against  
the nonlinear convection, therefore allowing  for a `larger' set of initial data which yield global smooth solutions. 
  Examples,  for which critical thresholds have been established, include the Euler-Poisson equations \cite{BL19, CCZ16, ELT01, TW08, WTB12}, the rotational Euler system \cite{LT04}, the hyperbolic relaxation systems \cite{LL08, LL09j, LL09}, and the Euler-Alignment model \cite{CCTT16, KT18, TT14}, among others. 
 
Critical threshold analysis for scalar hyperbolic balance laws or  $2\times 2$ systems of weakly hyperbolic balance laws is relatively easier. The usual technique is to study the ODE dynamics along the particle path, leading to some differential inequalities, from which threshold conditions are explicitly derived. However, it is often subtle to identify precise threshold conditions for systems of strictly hyperbolic balance laws due to dynamic coupling of distinct characteristic fields \cite{LL09j, TW08}. Also, in such systems including (\ref{hypMain}) breakdown occurs due to shock formation without density aggregation. 

We focus our attention on \eqref{hypMain}. The question is whether the nonlocal feature of the system preserves global regularity for a ``large" set of initial data. 
We answer this question for the 1D case by proving that equation \eqref{hypMain} admits global smooth solutions for a set 
of subcritical initial data if and only if 
$$
 u_{0x}(x) + \rho_0 (x) \geq 0 \;\text{for all} \; x\in\mathbb{R}.
$$
For $N=1$, \eqref{hypMain} appears similar to the Euler Alignment model 
\begin{align*}
& \rho_t +(\rho u)_x=0,\; x\in \mathbb{R}, \; t>0,\\
&  u_t + u u_x=Q\ast (\rho u) -u Q \ast \rho,
\end{align*} 
for which a critical threshold condition of form 
$$
 u_{0x}(x) + (Q\ast \rho_0) (x) \geq 0 \;\text{for all} \; x\in\mathbb{R}
$$
was proved in \cite{CCTT16}, where the authors refined the earlier result in \cite{TT14}. We would like to point out that  these two models are quite different in the sense that 
 the Euler-Alignment model is closer to a weakly hyperbolic system for which both $-u_x$ and $\rho$ blow up simultaneously, and our model is closer to a strictly hyperbolic system as $\rho$ remains bounded for all time even when $-u_x$ blows up at a finite time. Such difference requires novel estimates in our analysis for both local existence and quantifying the critical threshold.   


The rest of this paper is organized as follows. In Section \ref{mainresults} we present local existence results  and obtain the  uniform solution bounds for any spatial dimension. We also show that the asymptotic limit of the rescaled system is actually a strictly hyperbolic system. Section \ref{cta} contains the critical threshold analysis for \eqref{hypMain} in one dimension. This analysis is carried out as an a priori estimate on smooth solutions.  Appendix A is devoted to  the a priori estimates in high norm and  a detailed proof of the local well-posedness is finally given in Appendix B.

{\bf Notation: } Throughout the paper, we denote  $\beta = ||Q||_{W^{1,1}(\mathbb{R}^N)}$, and  $||\cdot ||$ the $L^2(\mathbb{R}^N)$ norm unless specified otherwise. 
For any function $f$, and multi-index $\alpha = (\alpha_1,\alpha_2,\ldots, \alpha_N) \in (\mathbb{Z}_+)^N$, $D_\alpha f = \partial^{\alpha_1}_{x_1} \partial^{\alpha_2}_{x_2}\ldots \partial^{\alpha_N}_{x_N} f$ and $|\alpha| = \alpha_1 +\ldots +\alpha_N$. Also, for any positive integer $k$, $||D^k f||^2 = \Sigma_{|\alpha|=k} ||D_\alpha f||^2$. Any Sobolev space, $W$ is to be interpreted as $W(\mathbb{R}^N)$ unless stated otherwise. And boldface letters and symbols are notations for vectors which are all of dimension $N$.

\section{Preliminaries and solution bounds}
\label{mainresults}
\subsection{Asymptotic limit}\label{sec2.1} We begin to characterize (formally) the behavior of system (\ref{scale}) as $\epsilon$ tends to zero.  
Let 
$$
\rho = n + \epsilon\rho_1 + \epsilon^2\rho_2+\ldots,  \quad 
 \mathbf{u} = \mathbf{v} + \epsilon \mathbf{u_1} + \epsilon^2 \mathbf{u_2} + \ldots.
 $$ 
Keeping $Q^\epsilon$ unchanged for the moment, and collecting the leading terms in $\epsilon$,  we obtain
\begin{align*}
& n_{t} + \nabla\cdot (n Q^\epsilon\ast \mathbf{v}) + O(\epsilon) = 0,\\
& \mathbf{v}_{t} + (\mathbf{v}\cdot\nabla ) \mathbf{v} = \frac{n(Q^\epsilon\ast \mathbf{v} - \mathbf{v})}{\epsilon} + n(Q^\epsilon\ast \mathbf{u_1} - \mathbf{u_1}) + \rho_1(Q^\epsilon\ast \mathbf{v} - \mathbf{v}) + O(\epsilon).
\end{align*}
In this way, letting $\epsilon\to 0$, we indeed recover the behavior of the leading order term from $\frac{1}{\epsilon}n(Q^\epsilon\ast \mathbf{v} - \mathbf{v})$.  Noting that $Q^\epsilon$ converges to the delta function, we get
\begin{align*}
& n_{t} + \nabla\cdot (n \mathbf{v})  = 0,\\
& \mathbf{v}_{t} + (\mathbf{v}\cdot\nabla ) \mathbf{v} = n\lim_{\epsilon\to 0} \frac{(Q^\epsilon\ast \mathbf{v} - \mathbf{v})}{\epsilon}.
\end{align*}
To evaluate the limit on the right hand side, 
\begin{align*}
 \frac{1}{\epsilon} \int  \frac{1}{\epsilon^N} Q\left(\frac{x-y}{\epsilon}\right)(\mathbf{v}(y) - \mathbf{v}(x))\, dy
= & \int Q(z)\frac{\mathbf{v}(x+\epsilon z) - \mathbf{v}(x)}{\epsilon}\, dz\\
= &  \int Q(z) (z\cdot\nabla_x ) \mathbf{v} (\zeta)\, dz,
\end{align*}
where $\zeta$ lies on the line joining $x$ and $x+\epsilon z$. 
Assuming $|z|Q(z)\in L^1(\mathbb{R}^N)$, we can use dominated convergence theorem to obtain the limit as $\epsilon\to 0$. Consequently,
$$
\lim_{\epsilon\to 0}\frac{1}{\epsilon} \int  \frac{1}{\epsilon^N} Q\left(\frac{x-y}{\epsilon}\right)(\mathbf{v}(y) - \mathbf{v}(x))\, dy =\left(\left( \int yQ(y)\, dy\right)\cdot\nabla \right)\mathbf{v}.
$$
Plugging this limit back in, we indeed obtain (\ref{localsys}). 

\subsection{Solution bounds}
Since \eqref{hypMain} is a nonlocal system, local existence and uniqueness does not follow from the existing theory of hyperbolic PDE systems. Therefore, we first study the existence of local-in-time classical solutions to the problem \eqref{hypMain}. We prove the following 
\begin{theorem}
\label{local}
$($\textbf{Local existence}$)$ Let $s > \frac{N}{2}$ be a positive integer.  Suppose $\rho_0\in L^\infty (\mathbb{R}^N), \mathbf{u_0} \in (L^\infty (\mathbb{R}^N))^N$ and $D^1\rho_{0}\in H^s (\mathbb{R}^N), D^1\mathbf{u_{0}}\in (H^s(\mathbb{R}^N))^N$. Then for any $M>0$, if 
$$
\max\{ ||\rho_0||_\infty, ||\nabla\rho_0||_{H^s(\mathbb{R}^N)}, ||\mathbf{u_0}||_\infty, ||\nabla\mathbf{u_{0}}||_{H^s(\mathbb{R}^N)}  \}\leq M,
$$ 
then there exists $T>0$, depending on $M,Q$, and continuously differentiable functions $\rho, \mathbf{u}$ satisfying 
$$
\max\left\{ \sup_{[0,T]}||\rho(t,\cdot)||_\infty, \sup_{[0,T]}||\nabla \rho(t,\cdot)||_{H^s(\mathbb{R}^N)}, \sup_{[0,T]}||\mathbf{u}(t,\cdot)||_\infty, \sup_{[0,T]} ||\nabla\mathbf{u}(t,\cdot)||_{H^s(\mathbb{R}^N)}\right\} \leq 2M,
$$ 
which are classical solutions to the problem \eqref{hypMain}.
\end{theorem}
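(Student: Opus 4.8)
The plan is to prove local existence by a standard iteration scheme adapted to the nonlocal structure, relying on the key observation that the convolution with $Q$ is a bounded operator on every Sobolev space with norm controlled by $\beta = \|Q\|_{W^{1,1}}$. First I would set up the iterates: let $(\rho^0,\mathbf{u}^0)=(\rho_0,\mathbf{u}_0)$ and, given $(\rho^k,\mathbf{u}^k)$, define $(\rho^{k+1},\mathbf{u}^{k+1})$ as the solution of the \emph{linear} system
\begin{align*}
&\rho^{k+1}_t + \nabla\cdot(\rho^{k+1}\, Q\ast\mathbf{u}^k)=0,\\
&\mathbf{u}^{k+1}_t + (\mathbf{u}^k\cdot\nabla)\mathbf{u}^{k+1} = \rho^k(Q\ast\mathbf{u}^k - \mathbf{u}^{k+1}),
\end{align*}
which is a linear transport/transport-with-damping system with smooth coefficients, solvable by the method of characteristics (the characteristics are generated by the velocity field $Q\ast\mathbf{u}^k$ for $\rho$ and by $\mathbf{u}^k$ for $\mathbf{u}$). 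The damping term $-\rho^k\mathbf{u}^{k+1}$ with $\rho^k\ge 0$ is dissipative, which actually helps the $L^\infty$ bound on $\mathbf{u}^{k+1}$.

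The core of the argument is the uniform-in-$k$ a priori bound: I would show that if the data satisfy the stated bound by $M$, then there is $T=T(M,Q)>0$ such that every iterate stays in the ball of radius $2M$ for the norm $\mathcal{N}(t) := \max\{\|\rho\|_\infty,\|\nabla\rho\|_{H^s},\|\mathbf{u}\|_\infty,\|\nabla\mathbf{u}\|_{H^s}\}$. For $\|\rho^{k+1}\|_\infty$ one integrates along characteristics and uses $|\nabla\cdot(Q\ast\mathbf{u}^k)|\le \|\nabla Q\|_{L^1}\|\mathbf{u}^k\|_\infty \le \beta\cdot 2M$, giving Grönwall control. For $\|\mathbf{u}^{k+1}\|_\infty$ the damping makes it even easier, with the forcing bounded by $2M\cdot(1+\beta)\cdot 2M$. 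For the high-order pieces $\|\nabla\rho^{k+1}\|_{H^s}$ and $\|\nabla\mathbf{u}^{k+1}\|_{H^s}$ one applies $D_\alpha$ for $1\le|\alpha|\le s+1$, does the standard $L^2$ energy estimate, and invokes Moser-type commutator and product estimates (Kato–Ponce), the Sobolev embedding $H^s\hookrightarrow L^\infty$ since $s>N/2$, and the fact that $\|D_\alpha(Q\ast\mathbf{u}^k)\|=\|Q\ast D_\alpha\mathbf{u}^k\|\le \beta\|D_\alpha\mathbf{u}^k\|$ (and similarly in $L^\infty$), to close a differential inequality of the form $\frac{d}{dt}\mathcal{N}^{k+1}(t)\le C(M,\beta)\big(1+\mathcal{N}^{k+1}(t)\big)$ as long as $\mathcal{N}^k\le 2M$. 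Choosing $T$ small depending only on $M$ and $\beta$ keeps $\mathcal{N}^{k+1}(T)\le 2M$, closing the induction. This is essentially the content of the a priori estimates deferred to Appendix A.

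Next I would prove convergence of the iterates. Writing the equations for the differences $\delta\rho^{k}=\rho^{k+1}-\rho^k$, $\delta\mathbf{u}^k=\mathbf{u}^{k+1}-\mathbf{u}^k$, one gets linear transport equations with source terms that are (multi)linear in the differences at level $k-1$ and in the already-bounded iterates, again using boundedness of $Q\ast(\cdot)$. Estimating in the \emph{low} norm $L^\infty_x$ for the $L^\infty$ parts and $L^2_x$ (or $H^1$) for the derivative parts — one norm below the norm in which uniform bounds hold, to avoid losing derivatives — yields $\sup_{[0,T']}\big(\|\delta\rho^k\|+\|\delta\mathbf{u}^k\|\big)\le \tfrac12 \sup_{[0,T']}\big(\|\delta\rho^{k-1}\|+\|\delta\mathbf{u}^{k-1}\|\big)$ after possibly shrinking $T$ to $T'$. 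Hence $(\rho^k,\mathbf{u}^k)$ is Cauchy in $C([0,T'];L^\infty\times L^\infty)$ (and in the weaker derivative norms), so it converges to a limit $(\rho,\mathbf{u})$; the uniform $H^{s+1}$-type bounds plus interpolation upgrade the convergence enough to pass to the limit in the equations and to get $\rho,\mathbf{u}\in C^1$, with the limit inheriting the bound by $2M$. Uniqueness follows from the same difference estimate applied to two solutions. The main obstacle I anticipate is the high-norm uniform bound: one must carefully handle the nonlocal flux term $\nabla\cdot(\rho\, Q\ast\mathbf{u})$ and the relaxation term $\rho(Q\ast\mathbf{u}-\mathbf{u})$ so that the convolution never costs a derivative — this works precisely because $Q\in W^{1,1}$, so $\|D_\alpha(Q\ast f)\| \le \min\{\|Q\|_{L^1}\|D_\alpha f\|,\ \|\nabla Q\|_{L^1}\|D^{|\alpha|-1}f\|\}$ — combined with getting the constants in the Moser estimates to depend only on $M$ and $\beta$ and not on $k$; everything else is routine.
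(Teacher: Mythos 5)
Your proposal is correct and follows essentially the same route as the paper's Appendix B: the same decoupled linear transport iteration, uniform high-norm bounds via the Moser-type commutator and product estimates with constants controlled by $\|Q\|_{W^{1,1}}$ and Sobolev embedding, a low-norm contraction of the iterates, an interpolation upgrade to pass to the limit and recover $C^1$ regularity, and uniqueness from the same difference estimate. The only deviation is that you put $Q\ast\mathbf{u}^k$ rather than $Q\ast\mathbf{u}^{k+1}$ in the relaxation term of the iteration; this is harmless (the nonnegative damping $\rho^k\geq 0$ still keeps $\|\mathbf{u}^{k+1}\|_\infty\leq 2M$, and it even makes the difference equation local in $\tilde{\mathbf{u}}$, avoiding the paper's auxiliary comparison-function argument), with the one caution that any low-norm contraction should be done in $L^\infty$ (or on derivative quantities only), as the paper does, since the non-decaying data prevent $L^2$ control of $\rho$ and $\mathbf{u}$ themselves.
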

\begin{remark} Instead of considering initial data $\rho_0, \mathbf{u_0} \in H^{s+1}(\mathbb{R}^N)$, as usually done when  using the energy methods to prove local well-posedness. Here we allow more general initial data in the sense that $\rho_0$ can have infinite mass, and both density and velocity may not be decaying at $|x|=\infty$. 
\end{remark}

We prove this result by constructing an approximating sequence to the exact solution. The details of the proof is deferred to Appendix B.


We now move on to proving a priori bounds on $\rho$ and $\mathbf{u}$. We begin by proving a useful lemma.
\begin{lemma}
\label{transbd}
Let $f,h\in C([0,\infty)\times\mathbb{R}^N)$ and $\mathbf{g}\in (C([0,\infty)\times\mathbb{R}^N))^N $ be such that
$$
f_t + \nabla\cdot (f \mathbf{g}) = h.
$$
Let us also assume that $g$ is Lipschitz continous in the second variable. Then
$$
||f(t,\cdot)||_\infty\leq ||f(0,\cdot)||_\infty e^{\int_0^t ||\nabla\cdot \mathbf{g}(s,\cdot)||_\infty ds} + \int_0^t \!\! e^{\int_s^t ||\nabla\cdot \mathbf{g}(\tau,\cdot)||_\infty d\tau}||h(s,\cdot)||_\infty ds,
$$
for all $t>0$.
\end{lemma}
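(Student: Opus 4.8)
The plan is to prove the stated $L^\infty$ bound by the method of characteristics. First I would introduce, for each fixed starting point $x_0\in\mathbb{R}^N$, the flow map $X(t;x_0)$ solving $\dot X = \mathbf{g}(t,X)$ with $X(0;x_0)=x_0$; the assumed Lipschitz continuity of $\mathbf{g}$ in the spatial variable guarantees existence, uniqueness, and continuity of these trajectories. Along such a trajectory the PDE $f_t + \nabla\cdot(f\mathbf{g}) = h$, rewritten as $f_t + (\mathbf{g}\cdot\nabla)f = h - f\,\nabla\cdot\mathbf{g}$, becomes the scalar ODE
\begin{align*}
\frac{d}{dt}\, f(t,X(t;x_0)) = h(t,X(t;x_0)) - f(t,X(t;x_0))\,(\nabla\cdot\mathbf{g})(t,X(t;x_0)).
\end{align*}
One subtlety is that the identity $\nabla\cdot(f\mathbf{g}) = (\mathbf{g}\cdot\nabla)f + f\,\nabla\cdot\mathbf{g}$ requires $\nabla\cdot\mathbf{g}$ to exist; since it appears in the statement through $\|\nabla\cdot\mathbf{g}\|_\infty$, I would simply assume that it is defined and continuous (or note that for the application to \eqref{hypMain} the relevant vector fields are $C^1$), so this reduction is legitimate.

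Next I would solve this linear scalar ODE along the characteristic by the integrating factor $\mu(t) = \exp\!\left(\int_0^t (\nabla\cdot\mathbf{g})(\tau,X(\tau;x_0))\,d\tau\right)$, which gives
\begin{align*}
f(t,X(t;x_0)) = f(0,x_0)\,\mu(t)^{-1} + \int_0^t \mu(s)\,\mu(t)^{-1}\, h(s,X(s;x_0))\,ds.
\end{align*}
Then I would bound each factor crudely: $|\mu(s)/\mu(t)| = \exp\!\left(-\int_s^t (\nabla\cdot\mathbf{g})(\tau,X(\tau;x_0))\,d\tau\right) \le \exp\!\left(\int_s^t \|\nabla\cdot\mathbf{g}(\tau,\cdot)\|_\infty\,d\tau\right)$, and likewise $|\mu(t)^{-1}| \le \exp\!\left(\int_0^t \|\nabla\cdot\mathbf{g}(s,\cdot)\|_\infty\,ds\right)$, while $|f(0,x_0)| \le \|f(0,\cdot)\|_\infty$ and $|h(s,X(s;x_0))| \le \|h(s,\cdot)\|_\infty$. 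Plugging these in yields exactly the claimed inequality for the value $f(t,X(t;x_0))$.

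Finally, since $t>0$ is fixed and the flow map $x_0 \mapsto X(t;x_0)$ is surjective onto $\mathbb{R}^N$ (every point $x$ at time $t$ is reached from some $x_0$ by running the trajectory backward), taking the supremum over $x_0$ converts the pointwise estimate into the stated bound on $\|f(t,\cdot)\|_\infty$. I do not expect a serious obstacle here; the only points needing a little care are justifying the characteristic reduction (i.e., that $\mathbf{g}$ is regular enough for the divergence to make sense and for trajectories to be well defined and cover the whole space) and being slightly careful that $\|h(s,\cdot)\|_\infty$ and $\|\nabla\cdot\mathbf{g}(s,\cdot)\|_\infty$ are finite on $[0,t]$ — which is implicit in the statement. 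If one wants to avoid the characteristic argument altogether, an alternative is a direct Gronwall estimate on $\|f(t,\cdot)\|_\infty$ using the Duhamel/transport structure, but the characteristic route is cleaner and makes the exponential weights transparent.
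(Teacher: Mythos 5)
Your proposal is correct and follows essentially the same route as the paper: rewrite the equation as a transport equation, integrate the resulting linear ODE along the characteristics of $\mathbf{g}$ with an explicit (integrating-factor) formula, bound the exponentials by $\|\nabla\cdot\mathbf{g}(\cdot,\cdot)\|_\infty$, and take the supremum over starting points. Your extra remarks on the regularity of $\nabla\cdot\mathbf{g}$ and the surjectivity of the flow map are just a more careful spelling-out of steps the paper leaves implicit.
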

\begin{proof}
On employing method of characteristics on the given equation,
$$
f_t + \mathbf{g}\cdot\nabla f + f\nabla \cdot \mathbf{g} = h,
$$
we have 
$$
\frac{df}{dt} + f(t,x(t,\alpha))\nabla\cdot \mathbf{g} (t,x(t,\alpha)) = h(t,x(t,\alpha)), 
$$
along the curve
$$
\left\{ (t,x):\frac{dx}{dt} = \mathbf{g}(t,x(t,\alpha)),\ x(0;\alpha) = \alpha \right\}.
$$
For any fixed $\alpha$, this is a linear ODE in time and can be explicitly solved to obtain
$$
f(t) = f(0)e^{-\int_0^t \nabla\cdot \mathbf{g}(s)ds}+\int_0^t\!\! e^{-\int_s^t \nabla\cdot \mathbf{g}(\tau)d\tau}h(s)ds.
$$
Taking supremum over $\alpha$ concludes the proof.
\end{proof}

\begin{proposition} 
\label{bdds}
Let $\rho, \mathbf{u}$  be continuously differentiable solutions to \eqref{hypMain} subject to 
initial data $\rho_0\in L^\infty (\mathbb{R}^N),\, \rho_0  \geq 0$ and $\mathbf{u_0}\in (L^\infty (\mathbb{R}^N))^N$.  Then
\begin{itemize}
\item
$($\textbf{Maximum principle on} u$)$ 
\begin{align}
\label{umaxprin}
\inf_{x\in\mathbb{R}^N} \mathbf{u_0}(x) \leq \mathbf{u} \leq \sup_{x\in\mathbb{R}^N} \mathbf{u_0}(x).
\end{align}  
The vector inequality is in the component-wise sense.
\item
$($\textbf{Bounds on }$ \rho)$
$\rho$ remains uniformly bounded for each $t>0$,
\begin{align}
\label{densitybound}
0 \leq \rho(t,x) \leq ||\rho_0||_\infty e^{||Q||_{W^{1,1}(\mathbb{R}^N)} || \mathbf{u_0}||_\infty t},\quad \forall x\in\mathbb{R}^N,\, t>0.
\end{align}
\end{itemize}
\end{proposition}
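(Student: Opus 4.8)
The plan is to prove the two assertions by running the method of characteristics on the nonlocal system and then feeding the resulting estimates into Lemma~\ref{transbd}. For the maximum principle on $\mathbf{u}$, I would work componentwise. Fix $i$ and look at the $i$-th component of the velocity equation, $\partial_t u_i + (\mathbf{u}\cdot\nabla)u_i = \rho\big((Q\ast u_i) - u_i\big)$. Along a characteristic curve $\dot x(t) = \mathbf{u}(t,x(t))$ this becomes the scalar ODE $\frac{d}{dt}u_i = \rho\big((Q\ast u_i)(t,x(t)) - u_i(t,x(t))\big)$. The key observation is that, since $Q\geq 0$ and $\int Q = 1$, the convolution $(Q\ast u_i)(t,x)$ is an average of the values of $u_i(t,\cdot)$, hence bounded above by $\sup_y u_i(t,y)$ and below by $\inf_y u_i(t,y)$. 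Thus wherever $u_i$ attains (or approaches) its spatial supremum, the forcing term $\rho\big((Q\ast u_i) - u_i\big)$ is $\le 0$ (using $\rho\geq 0$), and symmetrically at the infimum it is $\ge 0$. The cleanest rigorous way to turn this into the stated bound is to set $M_i(t) = \sup_x u_i(t,x)$ and use the ODE along characteristics together with Grönwall-type reasoning: $u_i(t,x(t)) \le u_i(0,x(0)) + \int_0^t \rho\,(M_i(s) - u_i(s,x(s)))\,ds$, and since $u_i(s,x(s))$ is what we are bounding, a standard comparison/bootstrap argument (or simply noting $\frac{d}{dt}M_i \le 0$ in a viscosity sense) yields $M_i(t)\le M_i(0) = \sup_x u_{0,i}(x)$; the lower bound is identical. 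One must be slightly careful that the supremum need not be attained since the domain is $\mathbb{R}^N$ and solutions need only be bounded; this is handled by an $\varepsilon$-argument or by invoking the uniform bounds from Theorem~\ref{local} which guarantee the relevant quantities are finite and the characteristic flow is well defined.

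Granting the maximum principle, the bound on $\rho$ follows from Lemma~\ref{transbd} applied to the density equation $\rho_t + \nabla\cdot(\rho\, Q\ast\mathbf{u}) = 0$, i.e. with $f = \rho$, $\mathbf{g} = Q\ast\mathbf{u}$, and $h = 0$. The hypotheses of the lemma require $Q\ast\mathbf{u}$ to be continuous and Lipschitz in $x$; this holds because $\mathbf{u}$ is $C^1$ with bounded derivatives (Theorem~\ref{local}) and $Q\in W^{1,1}$, so $\nabla(Q\ast\mathbf{u}) = (\nabla Q)\ast\mathbf{u}$ is bounded. The lemma then gives $\|\rho(t,\cdot)\|_\infty \le \|\rho_0\|_\infty \exp\!\big(\int_0^t \|\nabla\cdot(Q\ast\mathbf{u})(s,\cdot)\|_\infty\,ds\big)$. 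It remains to estimate $\|\nabla\cdot(Q\ast\mathbf{u})(s,\cdot)\|_\infty = \|(\nabla\cdot\mathbf{u})\ast Q\|_\infty$ — wait, more precisely $\nabla\cdot(Q\ast\mathbf{u}) = (\nabla Q)\ast\mathbf{u}$ read componentwise, so its sup norm is at most $\sum_i \|\partial_{x_i}Q\|_{L^1}\,\|u_i\|_\infty \le \|Q\|_{W^{1,1}}\,\|\mathbf{u}\|_\infty$. Invoking the maximum principle just proved, $\|\mathbf{u}(s,\cdot)\|_\infty \le \|\mathbf{u_0}\|_\infty$ for all $s$, so the exponent is bounded by $\|Q\|_{W^{1,1}}\,\|\mathbf{u_0}\|_\infty\, t$, which is exactly \eqref{densitybound}. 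The nonnegativity $\rho\geq 0$ is immediate from the same characteristic representation: along $\dot x = Q\ast\mathbf{u}$ the density satisfies $\frac{d}{dt}\rho = -\rho\,\nabla\cdot(Q\ast\mathbf{u})$, a linear homogeneous ODE, so $\rho(t,x(t)) = \rho_0(x(0))\exp(-\int_0^t \nabla\cdot(Q\ast\mathbf{u})\,ds) \ge 0$.

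The main obstacle, and the only genuinely delicate point, is the maximum principle step: on the unbounded domain $\mathbb{R}^N$ the spatial sup/inf of $u_i(t,\cdot)$ may not be achieved, so one cannot naively evaluate the ODE "at the maximizing characteristic." I would circumvent this either by a standard $\varepsilon$-maximizer argument (choose $x_\varepsilon$ with $u_i(t,x_\varepsilon) > M_i(t) - \varepsilon$, follow the characteristic backward, and pass to the limit), or — cleaner given what is already available — by noting that Theorem~\ref{local} provides $\nabla\mathbf{u}\in H^s$ with $s > N/2$, hence $\nabla\mathbf{u}$ (and thus $\mathbf{u}$ up to a constant) has enough decay/regularity for the extremum to be controlled, and then closing the estimate with Grönwall. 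Everything else is a routine convolution estimate plus a direct application of Lemma~\ref{transbd}.
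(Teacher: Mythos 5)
Your treatment of the density bound is exactly the paper's: apply Lemma~\ref{transbd} to $\rho_t+\nabla\cdot(\rho\,Q\ast\mathbf{u})=0$ with $h=0$, estimate $\|\nabla\cdot(Q\ast\mathbf{u})\|_\infty\le \|Q\|_{W^{1,1}}\|\mathbf{u}\|_\infty$, and then insert the maximum principle to get \eqref{densitybound}; nonnegativity from the characteristic representation is also the paper's observation. For the maximum principle itself you identify the same mechanism the paper uses ($\rho\ge 0$ and $Q\ast u_i$ being an average, so the relaxation term is restoring), but your implementation differs: the paper argues by contradiction at a first ``touching'' point $(t_0,x_0)$ where $u=\sup u_0$, notes $\nabla u(t_0,x_0)=0$ and that the convolution term is strictly negative there when $\rho(t_0,x_0)>0$, and handles the degenerate case $\rho(t_0,x_0)=0$ by the tilt $\overline{u}=u\mp\epsilon t$; you instead propose a comparison/Gr\"onwall argument on $M_i(t)=\sup_x u_i(t,x)$ along characteristics. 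If completed, your route would even avoid the paper's $\rho=0$ case split, since the comparison is insensitive to $\rho$ vanishing.

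The problem is that the step carrying all the weight is only asserted. The display $u_i(t,x(t))\le u_i(0,x(0))+\int_0^t\rho\,(M_i(s)-u_i(s,x(s)))\,ds$ is circular as it stands: $M_i$ is the quantity you are trying to bound and it sits inside the integral multiplied by $\rho$, which at this stage has no uniform bound (its bound \eqref{densitybound} is the \emph{other} half of the proposition and uses the maximum principle). Closing this requires an actual argument, e.g.\ ODE comparison along each characteristic against the constant level $M_i(0)+\delta$ (using $\rho\ge0$ only) together with a continuity/bootstrap in $t$, plus the unstated facts that $t\mapsto M_i(t)$ is finite and continuous and that the characteristic flow covers $\mathbb{R}^N$ — none of which you supply. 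Moreover, your second proposed fix for non-attainment of the supremum is not legitimate: Proposition~\ref{bdds} is an a priori estimate for arbitrary $C^1$ solutions with merely $L^\infty$ data, $\nabla\mathbf{u}\in H^s$ does not imply that $u_i(t,\cdot)$ decays or attains its supremum, and invoking Theorem~\ref{local} would be circular in the paper's architecture, since the proof of Theorem~\ref{local} (Appendix~B, estimate \eqref{vinfb}) itself appeals to the maximum-principle argument of this proposition. So either carry out the $\varepsilon$-maximizer/backward-characteristic comparison in full, or adopt the paper's first-crossing contradiction with the $\epsilon t$ tilt; as written, the key step of the maximum principle is a gap.
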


\begin{proof}
Using Lemma \ref{transbd} on \eqref{hypMain}(i) and $\beta = ||Q||_{W^{1,1}}$, we obtain,
\begin{align}\label{rb}
0 \leq \rho (t,x) \leq ||\rho_0||_\infty e^{\beta\int_0^t || \mathbf{u}(s,\cdot )||_\infty\, ds},\quad \forall x\in\mathbb{R}^N,\, t>0.
\end{align}
Therefore, it suffices to only prove the first assertion of the proposition. We will prove \eqref{umaxprin} for a single fixed $i^{th}$ component of $\mathbf{u}$ using $\partial_t u_i + \mathbf{u}\cdot \nabla u_i = \rho(Q\ast u_i - u_i )$. Let's denote $u_i$ by $u$. Likewise $u(0,x) = i^{th}$ component of $\mathbf{u_0}=: u_0$. Let us suppose for contradiction that $u$ goes outside the interval $[\inf u_0 ,\sup u_0]$ at some positive time. (Without loss of generality, assume it violates the upper bound, similar argument holds for the lower bound as well). 
Consider the first time it happens. So, $\exists t_0\geq 0$ and $x_0$ such that $u(t_0,x_0)=M:=\sup_{x\in\mathbb{R}^N} u_{0}(x)$ and $\forall \delta>0$, $\exists  t_\delta, x_\delta$, with $t_0<t_\delta<t_0+\delta$ and $u(t_\delta ,x_\delta)>M$. \\
We claim that $\nabla u(t_0 ,x_0) = 0$. Because if $\partial_j u(t_0,x_0)$ has a sign for some $j=1,\ldots , N$, then there exists some $x_1 = x_0 \pm \delta e_j$ in the neighborhood of $x_0$ such that $u(t_0, x_1)>M$ which is indeed a contradiction.\\
For the moment, let's assume $\rho(t_0,x_0)>0$. From \eqref{hypMain}(ii),
$$
u_t (t_0,x_0) = \rho(t_0,x_0)\int_{\mathbb{R}^N}Q(x_0-y)\left(u(t_0,y) - u(t_0,x_0)\right)\, dy<0.
$$
Also, for any differentiable curve $X(t)$ with $X(t_0) = x_0$ we have that
$$
\frac{du}{dt}(t_0,X(t_0)) = \left. u_t + \mathbf{u}\cdot\nabla u\right|_{ (t_0, x_0)}  = u_t(t_0,x_0)<0,
$$
and, therefore, $u(t,x)<M$ for any $(t,x)$ sufficiently close to $(t_0,x_0)$ with $t>t_0$.
This results in a contradiction.\\ 
Lastly, if $\rho(t_0,x_0)=0$, let $\overline{u} := u-\epsilon t$ for $\epsilon >0$ fixed. Plugging this in \eqref{hypMain}(ii) for the $i^{th}$ equation, we obtain
$$
\overline{u}_t + \mathbf{u}\cdot\nabla\overline{u} = \rho (Q\ast\overline{u} - \overline{u})-\epsilon,
$$ 
and, therefore, $\overline{u}_t(t_0,x_0)<0$. Along the same line of argument as above, we have $\overline{u}\leq\sup  \overline{u}(0,\cdot) = \sup u_0$. Consequently, $u \leq \sup u_0+\epsilon t$ holds for any $\epsilon>0$. Taking $\epsilon\to 0$ gives one-sided inequality of the maximum principle. Likewise, the other inequality can be obtained by letting $\overline{u} = u + \epsilon t$. This completes the proof of the proposition.
\end{proof}

\begin{remark}
It is well known that the 1D “pressure-less” Euler system, that is (1.1) with $N=1$ and $Q\ast u$ replaced by $u$, admits the aggregation phenomena: the breakdown occurs when $-u_x(t, x)$ and $\rho(t,x)$ approach $+\infty$ 
simultaneously at the critical time, $t\uparrow t_c$. In contrast, the above result tells us that the nonlocal velocity for the density prevents the concentration of the density. Thus, the only breakdown for the full system (1.1) occurs through the formation of shock discontinuities, where $|\nabla \mathbf{u}|$ and/or $|\nabla\rho|$ blow up as $t\uparrow t_c$, 
but $\rho$ will not concentrate at any point.
\end{remark}

\begin{theorem}\label{hsenergyestimate}
$($\textbf{$H^s$ energy estimate}$)$ Let $s>\frac{N}{2}$ be a positive integer. Define $Y(t)= ||\nabla \mathbf{u}(t,\cdot)||^2_{H^s(\mathbb{R}^N)} + ||\nabla\rho(t,\cdot)||^2_{H^s(\mathbb{R}^N)}$. Then  
$$
Y(T)\leq Y(0)\exp\left[ C\int_0^T ||\rho(\tau,\cdot)||_\infty + ||\mathbf{u}(\tau,\cdot)||_\infty + ||\nabla\mathbf{u}(\tau,\cdot)||_\infty\, d\tau \right],
$$
with $C = C(||Q||_{W^{1,1}(\mathbb{R}^N)},s)$. In particular, a unique smooth solution to \eqref{hypMainwic} exists for all time if and only if $\int_0^T  ||\nabla\mathbf{u}(\tau,\cdot)||_\infty\, d\tau < \infty$ for all $T>0$. 
\end{theorem}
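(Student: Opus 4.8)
The plan is to establish the bound as an a priori estimate: for a smooth solution on $[0,T]$ I will first prove the differential inequality
$$\frac{d}{dt}Y(t)\le C\bigl(||\rho(t,\cdot)||_\infty+||\mathbf{u}(t,\cdot)||_\infty+||\nabla\mathbf{u}(t,\cdot)||_\infty\bigr)Y(t),\qquad C=C(\beta,s),$$
and then integrate it with Gronwall's inequality; the resulting quantitative bound, combined with the local existence Theorem \ref{local}, will give the continuation criterion. (The computations themselves are deferred to Appendix A.)

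For the differential inequality I will use the commutator--energy method. For each multi-index $\alpha$ with $1\le|\alpha|\le s+1$, apply $D_\alpha$ to the velocity equation componentwise and to the density equation in the nonconservative form $\rho_t+(Q\ast\mathbf{u})\cdot\nabla\rho=-\rho\,\nabla\cdot(Q\ast\mathbf{u})$, pair in $L^2$ with $D_\alpha\mathbf{u}$ and $D_\alpha\rho$, and sum over $\alpha$; this reproduces $\frac12\frac{d}{dt}Y(t)$. Integrating the advection terms by parts produces $-\frac12\int(\nabla\cdot\mathbf{u})|D_\alpha\mathbf{u}|^2$ and $-\frac12\int\nabla\cdot(Q\ast\mathbf{u})|D_\alpha\rho|^2$, bounded by $\bigl(||\nabla\mathbf{u}||_\infty+\beta||\mathbf{u}||_\infty\bigr)Y$; the commutators $[D_\alpha,\mathbf{u}\cdot\nabla]\mathbf{u}$, $[D_\alpha,(Q\ast\mathbf{u})\cdot\nabla]\rho$ and the forcing terms $D_\alpha(\rho(Q\ast\mathbf{u}-\mathbf{u}))$, $D_\alpha(\rho\,\nabla\cdot(Q\ast\mathbf{u}))$ are estimated by Moser-type product and Kato--Ponce commutator inequalities. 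Throughout, the nonlocal factors are tamed by Young's inequality: $||Q\ast f||_{L^p}\le||f||_{L^p}$ and $\nabla(Q\ast f)=(\nabla Q)\ast f$, so that $||D_\alpha(Q\ast f)||\le\beta\,||D^{|\alpha|-1}f||$ for $|\alpha|\ge1$ --- that is, convolution against $Q\in W^{1,1}$ gains one derivative. This gain is precisely what makes the density estimate close at the top order $|\alpha|=s+1$: the most dangerous piece of the differentiated flux, $\rho\sum_j(\partial_j Q)\ast D_\alpha u_j$, involves only order-$s$ derivatives of $\nabla\mathbf{u}$ and is bounded by $\beta||\rho||_\infty\,||\nabla\mathbf{u}||_{H^s}\lesssim\beta||\rho||_\infty Y^{1/2}$, so no derivative is lost; the same gain bounds $||\nabla\cdot(Q\ast\mathbf{u})||_\infty$ and $||\nabla(Q\ast\mathbf{u})||_\infty$ by $\beta||\mathbf{u}||_\infty$ (alternatively by $||\nabla\mathbf{u}||_\infty$), which keeps every coefficient within the three norms in the statement.

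The step I expect to be the main obstacle is the top-order ($|\alpha|=s+1$) density estimate, and within it the transport commutator $[D_\alpha,(Q\ast\mathbf{u})\cdot\nabla]\rho$: this is where the coupling of the two distinct characteristic fields is felt, and a naive transport estimate leaves a contribution of the form $||\nabla\rho||_\infty\,Y$, which is not among the admissible coefficients. I plan to absorb it by combining the extra regularity of the transport velocity $Q\ast\mathbf{u}$ (again the $W^{1,1}$ gain, which shifts one derivative off the highest $\mathbf{u}$-factor onto $Q$) with the Sobolev embedding $H^s\hookrightarrow L^\infty$, $s>N/2$, and the a priori density bound of Proposition \ref{bdds}, so that any surviving $||\nabla\rho||_\infty$-type quantity is reabsorbed and only $||\rho||_\infty+||\mathbf{u}||_\infty+||\nabla\mathbf{u}||_\infty$ remains. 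This is exactly the point where the present system behaves like a strictly hyperbolic one --- $\rho$ stays bounded even while $|\nabla\mathbf{u}|$ may blow up --- rather than like the weakly hyperbolic Euler--Alignment model, and hence needs a genuinely different argument. Summing over $1\le|\alpha|\le s+1$ yields the differential inequality, and Gronwall gives the asserted bound.

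For the equivalence, Proposition \ref{bdds} shows that $||\rho(\tau,\cdot)||_\infty$ and $||\mathbf{u}(\tau,\cdot)||_\infty$ are finite on every bounded interval, so the exponent in the bound is finite on $[0,T]$ exactly when $\int_0^T||\nabla\mathbf{u}(\tau,\cdot)||_\infty\,d\tau<\infty$; conversely a solution of the stated regularity has $\nabla\mathbf{u}\in C([0,T];H^s)\hookrightarrow C([0,T];L^\infty)$, so this integral is automatically finite. Feeding the resulting uniform $H^s$ bound back into Theorem \ref{local} lets the local solution be continued up to any such $T$, which is the claimed criterion.
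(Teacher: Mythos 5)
Your overall strategy (energy method with $D_\alpha$ for $1\le|\alpha|\le s+1$, integration by parts on the transport terms, commutator/Moser estimates, the derivative gain from convolution with $Q\in W^{1,1}$, then Gronwall) is the same as the paper's, and your velocity estimate and the reduction of the continuation criterion to $\int_0^T||\nabla\mathbf{u}||_\infty\,d\tau$ via Proposition \ref{bdds} match the paper. The gap is exactly at the step you flag as the main obstacle, and your proposed fix does not close it. Writing the density equation in the nonconservative form $\rho_t+(Q\ast\mathbf{u})\cdot\nabla\rho=-\rho\,\nabla\cdot(Q\ast\mathbf{u})$ forces you to estimate $[D_\alpha,(Q\ast\mathbf{u})\cdot\nabla]\rho$ at top order $|\alpha|=s+1$, whose worst piece is $D_\alpha(Q\ast u_j)\,\partial_j\rho$. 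The convolution gain only turns this into something like $\beta\,||D^{s}\mathbf{u}||\,||\nabla\rho||_\infty$, and the tools you invoke cannot remove the factor $||\nabla\rho||_\infty$: Proposition \ref{bdds} bounds $||\rho||_\infty$, not $||\nabla\rho||_\infty$, and the Sobolev embedding only gives $||\nabla\rho||_\infty\le C||\nabla\rho||_{H^s}\le CY^{1/2}$. After pairing with $D_\alpha\rho$ this produces a term of size $Y^{3/2}$, so the best you get is $\frac{dY}{dt}\le C(||\rho||_\infty+||\mathbf{u}||_\infty+||\nabla\mathbf{u}||_\infty)Y+CY^{3/2}$. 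That Riccati-type inequality yields neither the stated bound $Y(T)\le Y(0)\exp[C\int_0^T(\cdots)d\tau]$ nor the ``if'' direction of the blow-up criterion, since $Y$ could then blow up even with $\int_0^T||\nabla\mathbf{u}||_\infty\,d\tau$ finite.

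The paper avoids this by never passing to the nonconservative form at top order: it keeps the flux in divergence form and applies the one-sided commutator estimate of Lemma \ref{commest} to $D_\alpha\bigl(\nabla\cdot(\rho\,Q\ast\mathbf{u})\bigr)-(Q\ast\mathbf{u})\cdot\nabla D_\alpha\rho$, i.e.\ with $f=\rho$, $g=Q\ast\mathbf{u}$ and $m=s+2$. In that arrangement the $L^\infty$ norm falls on $\rho$ itself (admissible) and the top-order derivative, of order $s+2$, falls on $Q\ast\mathbf{u}$, where Young's inequality with $\nabla Q\in L^1$ reduces it to $\beta||D^{s+1}\mathbf{u}||$; the other endpoint carries $||\nabla(Q\ast\mathbf{u})||_\infty\le\beta||\mathbf{u}||_\infty$ against $||D^{s+1}\rho||$. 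No $||\nabla\rho||_\infty$ ever appears, the inequality is genuinely linear in $Y$ with coefficient $||\rho||_\infty+||\mathbf{u}||_\infty+||\nabla\mathbf{u}||_\infty$, and Gronwall then gives the theorem. So to repair your proof, replace your transport-commutator decomposition of the density equation by this conservative-form commutator (or prove a commutator bound in which the undifferentiated factor $\rho$, rather than $\nabla\rho$, is the one measured in $L^\infty$); the rest of your argument then goes through as in the paper.
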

We prove this result using a commutator type estimate found in \cite[Proposition 2.1]{M84}.  Details of the proof are 
deferred to Appendix A.  


\section{Critical Thresholds}
\label{cta}
Our main result, investigated in this section, confirms the critical threshold phenomenon by quantifying  
the precise threshold in one dimension. Since this section assumes $N=1$ in \eqref{hypMain}, we replace $\nabla \rho$ with $\rho_x$ and use normal letters instead of boldface letters.  
\begin{theorem}
\label{finalresult}
Consider the nonlocal Euler system with relaxation \eqref{hypMainwic}.
\begin{itemize}
\item{[Subcritical region]} A unique solution $\rho,u\in C([0,\infty);L^\infty(\mathbb{R}))$ and 
$$\rho_x , u_x \in C([0,\infty);H^s(\mathbb{R})), \quad s\geq 1 $$ exists if $u_{0x}(x) + \rho_0 (x) \geq 0$ for all $x\in\mathbb{R}$. 
\item{[Supercritical region]} If $\exists x_0\in\mathbb{R}$ for which $u_{0x}(x_0) < -\rho_0 (x_0)$, then $u_x\to-\infty$ in a finite time.
\end{itemize}
\end{theorem}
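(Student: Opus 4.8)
The plan is to study the dynamics along the characteristic curve $\dot x = u$ and track the two natural quantities $u_x$ and $\rho$, but — crucially — to introduce the nonlocal perturbation variable $w := u_x + \rho$, which by analogy with the local limit \eqref{localsys} (whose Riemann invariants involve $u_x + \gamma\rho_x$ and $u_x$) ought to satisfy a clean Riccati-type inequality. First I would differentiate the momentum equation \eqref{hypMain}(ii) in $x$ to get, along $\dot x = u$,
\[
\frac{d}{dt} u_x = -u_x^2 + \rho\,(Q\ast u)_x - \rho\, u_x + \rho_x\,(Q\ast u - u).
\]
Next, using the continuity equation \eqref{hypMain}(i) in the form $\rho_t + (Q\ast u)\rho_x + \rho (Q\ast u)_x = 0$ and rewriting the transport in terms of $\dot x = u$, I would compute $\frac{d}{dt}\rho = -\rho (Q\ast u)_x + \rho_x\,(u - Q\ast u)$ along the same curve. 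Adding, the troublesome nonlocal terms $\pm\rho(Q\ast u)_x$ cancel and the $\rho_x(Q\ast u - u)$ terms cancel as well, leaving
\[
\frac{d}{dt}\,(u_x + \rho) = -u_x^2 - \rho\,u_x = -u_x(u_x+\rho) = -(w - \rho)\,w,
\]
i.e. $\dot w = -w^2 + \rho w$. Since Proposition \ref{bdds} gives $0 \le \rho(t,x) \le \|\rho_0\|_\infty e^{\beta\|u_0\|_\infty t} =: R(t)$, a locally bounded function, $w$ obeys $\dot w \ge -w^2$ when $w \le 0$; hence if $w(0) \ge 0$ at a point, a comparison argument shows $w(t) \ge 0$ persists on that characteristic (the ODE $\dot z = -z^2 + R(t) z$ with $z(0)=0$ stays $\ge 0$), while if $w(0) < 0$ at $x_0$ then $\dot w \le -w^2$ forces $w \to -\infty$ in finite time, and then, since $\rho \ge 0$, necessarily $u_x = w - \rho \to -\infty$ as well — this is the supercritical half.

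For the subcritical half I still need a global bound on $u_x$ (not just $w \ge 0$) and on $\rho_x$, because Theorem \ref{hsenergyestimate} tells me a global smooth solution exists iff $\int_0^T \|u_x(\tau,\cdot)\|_\infty\,d\tau < \infty$ for all $T$. From $w \ge 0$ and $\rho \le R(t)$ I get the one-sided bound $u_x \ge -\rho \ge -R(t)$, so $u_x$ is bounded below; for the upper bound I would go back to $\dot u_x = -u_x^2 + \rho(Q\ast u)_x - \rho u_x + \rho_x(Q\ast u - u)$ and estimate the nonlocal terms using Young's inequality: $\|(Q\ast u)_x\|_\infty = \|Q_x \ast u\|_\infty \le \|Q_x\|_{L^1}\|u\|_\infty \le \beta\|u_0\|_\infty$ by the maximum principle \eqref{umaxprin}, and $\|Q\ast u - u\|_\infty \le 2\|u_0\|_\infty$. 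This still leaves the $\rho_x(Q\ast u - u)$ term, which couples in $\rho_x$; so I would simultaneously track $\rho_x$ along $\dot x = u$: differentiating the continuity equation gives $\frac{d}{dt}\rho_x = -2 u_x \rho_x - \rho\, u_{xx} - (Q_x\ast u)\rho_x - \rho (Q_{xx}\ast u)\cdots$ — wait, $u_{xx}$ appears, so this does not close at this order.

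The cleaner route, which I would actually pursue, is: once $w = u_x + \rho \ge 0$ everywhere is established, bound $\rho$ above by $R(T)$ on $[0,T]$ and bound $u$ by $\|u_0\|_\infty$; then the quantity $d := u_x - \rho$ (the analog of the second Riemann invariant) satisfies, along $\dot x = u$, an equation of the form $\dot d = -u_x d + (\text{nonlocal terms bounded by } C(1 + \|\rho_x\|_\infty))$, and $\rho_x = (w-d)/2\cdot$… — rather than fight this coupling pointwise, I would instead argue that $w \ge 0$ plus the $L^\infty$ bounds on $\rho, u$ already give, via $u_x = w - \rho$, that $u_x$ is bounded \emph{below} uniformly on $[0,T]$, and for the upper bound use that $\int_0^T \sup_x (u_x)_+\,d\tau$ can be controlled because $\frac{d}{dt}\int \text{(something)}$ — more precisely, I would feed the lower bound $u_x \ge -R(T)$ into the Riccati inequality $\dot u_x \le -u_x^2 + C u_x + C$ to conclude $u_x$ cannot blow up to $+\infty$ in finite time (upward blow-up is impossible for $\dot y \le -y^2 + Cy + C$ since the $-y^2$ term dominates for large positive $y$), giving $u_x \in L^\infty_{loc}$ in time, hence $\int_0^T\|u_x\|_\infty < \infty$, hence global existence by Theorem \ref{hsenergyestimate}; the $H^s$ persistence of $\rho_x, u_x$ then follows from the same theorem. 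The main obstacle I anticipate is precisely this last point — showing the nonlocal commutator/convolution terms in the $u_x$ equation are genuinely lower-order (controllable by $\|u_0\|_\infty$, $\|\rho_0\|_\infty$ and the already-derived bounds) rather than requiring control of $u_{xx}$ or $\rho_x$; making the Riccati comparison rigorous while carrying these nonlocal error terms is the technical heart of the argument.
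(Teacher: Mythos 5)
Your derivation of the closed equation $\dot w = -w(w-\rho)$ for $w = u_x+\rho$ along $\dot x = u$ is exactly the paper's key identity (their quantity $d$ in \eqref{nonlin}), and your supercritical argument coincides with the paper's: $w(0)<0$ forces $\dot w \le -w^2$, Riccati blow-up in finite time, and then $u_x = w-\rho \le w \to -\infty$ since $\rho\ge 0$.

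The gap is in the subcritical half. From the closed equation you only extract $w\ge 0$, and you then try to get the missing upper bound on $u_x$ by returning to the non-closed equation $\dot u_x = -u_x^2 - \rho u_x + \rho\,(Q\ast u)_x + \rho_x\,(Q\ast u - u)$ and asserting a Riccati inequality $\dot u_x \le -u_x^2 + Cu_x + C$. That inequality is not justified: the term $\rho_x\,(Q\ast u-u)$ is not controlled by $\|u_0\|_\infty$ and $\|\rho_0\|_\infty$, since $\rho_x$ has no pointwise a priori bound (controlling it is essentially what the whole theorem is about), so the ``technical heart'' you defer never closes along that route. The fix needs no new idea: your own closed equation already gives the upper bound. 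Whenever $w > \rho$ one has $\dot w = -w(w-\rho) < 0$, so by comparison $0 \le w(t) \le \max\{w(0),\ \sup_{[0,t]}\rho\}$; combined with the density bound \eqref{densitybound} this yields
\begin{equation*}
\|u_x(t,\cdot)\|_\infty \le \|u_{0x}\|_\infty + 2\|\rho_0\|_\infty e^{\beta\|u_0\|_\infty t},
\end{equation*}
which is locally integrable in time, and Theorem \ref{hsenergyestimate} then gives global existence together with the stated $H^s$ regularity of $\rho_x, u_x$. This two-sided bound on $w$ is precisely the paper's Lemma \ref{boundond} (used with Proposition \ref{bdds}); your detour through the equation for $u_x$ alone, where the nonlocal terms do not cancel, is the step that would fail.
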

\begin{remark} 
This result is sharp in the sense that it provides a precise initial threshold characterized by the point values of $\rho_0$ and $u_{0x}$: either subcritical initial data which evolve into global strong solutions, or supercritical initial data which will lead to a blow up in a finite time. This is reminiscent 
of the critical threshold in 1D Euler-Poisson systems \cite{ELT01}.
\end{remark}

The threshold analysis to be carried out is the a priori estimate on smooth solutions as long as they exist. 
In this section, we will show the presence of a precise critical threshold which divides the initial data into subcritical and supercritical regions. 

We proceed to derive the characteristic system which is essential in our analysis. Differentiate the second equation in (\ref{hypMain})(ii) with respect to $x$ to obtain:
$$
u_{xt} + uu_{xx} + u_x^2  = (\rho Q\ast u )_x - \rho_x u - \rho u_x.
$$
Using \eqref{hypMain}(i) and setting $d = u_x + \rho$, we obtain
	
	\begin{align}\label{nonlin}
	& d' = -d(d-\rho),
	\end{align}
	where 
	$
	\{\}' = \frac{\partial}{\partial t} + u\frac{\partial}{\partial x},
	$
	 denotes the differentiation along the particle path,
	\begin{align}
	\label{chpath}
	\Gamma=\{(t,\RN{10})|\;  d\RN{10}/dt=u(t,\RN{10}(t)),\, \RN{10}(0)=\alpha  \in \mathbb{R}\},
	\end{align}
	and $d_0 := d(0; \alpha) = u_{0x} (\alpha ) + \rho_0(\alpha )$. Note that here $X(t):=X(t; \alpha)$ and $\rho(t):=\rho(t, X(t; \alpha))$.
	
%

\begin{lemma}
\label{boundond}
For the equation \eqref{nonlin} with $\rho(t) \geq 0$, we have the following:
\begin{itemize}
\item
If $d_0\geq 0$, then $d$ remains bounded for all $t>0$. More precisely,
$$
0\leq d(t)\leq \max\{ d_0, \rho(t) \},\quad \forall t>0.
$$
\item
If $d_0<0$, then $\exists t_c$ such that $\lim_{t\to t_c^-} d(t) = -\infty$.
\end{itemize}
\end{lemma}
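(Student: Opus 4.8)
The plan is to analyze the scalar ODE \eqref{nonlin}, $d' = -d(d-\rho) = -d^2 + \rho d$, along a fixed particle path, treating $\rho(t) = \rho(t, X(t;\alpha))$ as a given nonnegative continuous (indeed, by Proposition \ref{bdds}, bounded) coefficient. The key observation is that this is a Bernoulli-type equation, so I would substitute $w = 1/d$ (valid as long as $d$ does not vanish) to linearize it: from $d' = -d^2 + \rho d$ one gets $w' = 1 - \rho w$, i.e. $w' + \rho w = 1$. This linear equation is solved explicitly by the integrating factor $\mu(t) = \exp\!\left(\int_0^t \rho(s)\,ds\right)$, giving
\begin{align*}
w(t) = \frac{1}{\mu(t)}\left( w_0 + \int_0^t \mu(s)\,ds \right), \qquad w_0 = \frac{1}{d_0}.
\end{align*}

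For the first bullet ($d_0 \geq 0$): first dispose of the trivial case $d_0 = 0$, where $d \equiv 0$ solves \eqref{nonlin} and uniqueness gives $d(t) \equiv 0$, consistent with the claimed bound. For $d_0 > 0$, note $w_0 > 0$ and $\int_0^t \mu(s)\,ds > 0$, so $w(t) > 0$ for all $t$, hence $d(t) > 0$ never blows up or changes sign; in particular $d$ is defined for all $t > 0$. To get the sharp bound $d(t) \le \max\{d_0, \rho(t)\}$, I would argue directly from the ODE rather than the formula: wherever $d(t) > \rho(t)$ we have $d' = -d(d-\rho) < 0$, so $d$ is strictly decreasing there. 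Thus if $d_0 \le \rho(0)$ the solution can never exceed the running value of $\rho$ (a standard comparison/barrier argument: at any first time $d$ touches $\rho$ from below it cannot cross, and it only decreases when above $\rho$), while if $d_0 > \rho(0)$, $d$ decreases as long as it stays above $\rho$, so $d(t) \le d_0$ until it meets $\rho$, after which it is again trapped below $\rho$; combining, $d(t) \le \max\{d_0, \rho(t)\}$, and $d(t) \ge 0$ from the sign analysis. I would write this as a short lemma-style comparison argument.

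For the second bullet ($d_0 < 0$): now $w_0 = 1/d_0 < 0$. Since $\rho \ge 0$ and bounded, say $\rho(t) \le R$ for all $t \ge 0$ by \eqref{densitybound}, we have $\mu(s) \ge 1$ for all $s$, hence $\int_0^t \mu(s)\,ds \ge t$. Therefore $w_0 + \int_0^t \mu(s)\,ds \ge w_0 + t$, which is still negative for small $t$ but the whole expression $w(t) = \mu(t)^{-1}(w_0 + \int_0^t \mu(s)\,ds)$ starts negative; I claim it reaches $0$ in finite time. Indeed $w$ stays negative only while $w_0 + \int_0^t\mu(s)\,ds < 0$; since the integrand is $\ge 1$, this forces the bracket to hit $0$ at some $t_* \le |w_0| = 1/|d_0|$. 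As $t \uparrow t_c := $ (that zero of the bracket, equivalently the first zero of $w$), $w(t) \uparrow 0^-$, so $d(t) = 1/w(t) \to -\infty$. One must check $t_c$ is genuinely finite and $d$ stays finite (hence the solution stays valid, $w$ stays well-defined) on $[0,t_c)$: on that interval $w < 0$ is bounded away from $0$ except near $t_c$, and $\mu$ is finite, so $d = 1/w$ is continuous there; the blow-up is exactly at $t_c$. The bound $t_c \le 1/|d_0|$ can be recorded as a bonus (it also matches the pressureless-Euler intuition in the remark following Proposition \ref{bdds}).

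The main obstacle, such as it is, is bookkeeping around the substitution $w = 1/d$: one must ensure $d$ does not pass through $0$ before any claimed blow-up, so that $w$ is well-defined on the relevant interval — this is handled by the sign-preservation observations above ($d_0 > 0 \Rightarrow d > 0$ always; $d_0 < 0 \Rightarrow d < 0$ on $[0,t_c)$, since $w$ only increases toward $0$ from below and cannot jump past it). A secondary point is that we only know $\rho(t,X(t))$ is bounded and continuous along the characteristic (from Proposition \ref{bdds} together with the local regularity in Theorem \ref{local}), not more; but boundedness $0 \le \rho \le R$ is all that the argument uses, via $1 \le \mu(t) \le e^{Rt}$. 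Everything else is elementary ODE analysis, so the proof should be short.
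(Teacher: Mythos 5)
Your proposal is correct, but it reaches the lemma by a more computational route than the paper. The paper works directly on the Riccati equation: for $d_0>0$ it reads off the sign of $d'=-d(d-\rho)$ on either side of the barrier $\rho$ (positivity because $d'>0$ on $(0,\rho)$, decay because $d'<0$ when $d>\rho$, boundedness via \eqref{densitybound}); for $d_0<0$ it observes $d\le d_0<0$ forces $d'<-d^2$, whence $d(t)<d_0/(1+d_0t)$ and blow-up before $-1/d_0$ by a one-line comparison. You instead linearize via the Bernoulli substitution $w=1/d$, solve $w'+\rho w=1$ exactly with the integrating factor $\mu(t)=\exp\bigl(\int_0^t\rho(s)\,ds\bigr)$, and read off global positivity (for $d_0>0$) and the finite-time vanishing of $w$ from below (for $d_0<0$, since $\mu\ge1$ forces $1/d_0+\int_0^t\mu(s)\,ds$ to reach zero by $t=1/|d_0|$). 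What this buys is an exact representation — the blow-up time is characterized as the first zero of $1/d_0+\int_0^t\mu(s)\,ds$, and one gets quantitative bounds such as $d(t)\le \mu(t)\bigl(1/d_0+t\bigr)^{-1}$ — at the modest cost of the sign-preservation/continuation bookkeeping for $w=1/d$, which you handle correctly; the paper's argument is shorter and never divides by $d$. One caveat, shared with the paper's own wording rather than introduced by you: the barrier step "$d$ touches $\rho$ from below and cannot cross" is not literally valid for the moving barrier $\rho(t)$, since at a touching time $d'=0$ while $\rho$ may be strictly decreasing, so $d$ can exceed the \emph{instantaneous} value $\rho(t)$ afterwards; what the monotonicity argument genuinely yields is $0\le d(t)\le\max\{d_0,\sup_{s\in[0,t]}\rho(s)\}$, and this running-supremum bound is all that is used in the proof of Theorem \ref{finalresult}, where $\rho$ along the path is dominated by the increasing function $\|\rho_0\|_\infty e^{\beta\|u_0\|_\infty t}$.
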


\begin{proof}
If $d_0=0$, then $d=0$ is the unique solution. If $d_0>0$: whenever $d\in (0,\rho)$, $d'>0$ and hence, $d$ increases. This ensures positivity of $d$. On the other hand, if $d>\rho$ then $d'<0$ and hence, $d$ is decreasing. \eqref{densitybound} ensures $d$ remains finite for any $t>0$. This proves the first assertion.\\ 
Now assume $d_0<0$. Clearly $d'<0$ for all positive times and, therefore, $d\leq d_0<0$ for all $t>0$. Consequently, $d'<-d^2$ which gives
$$
d < \frac{d_0}{1+d_0 t}\quad \forall t>0.
$$  
And $d\to -\infty$ in finite time, $t_c <-1/d_0$, which proves the second assertion.
\end{proof}

We now prove Theorem \ref{finalresult} using this proposition.
\begin{proof}
For any fixed $(t,x)$, there exists a unique $\alpha$ and a curve $\RN{10}(t;\alpha)$ such that  $\RN{10}(t;\alpha)=x$,
 and 
$$
u_x(t, x)+ \rho(t,x) =u_x(t,\RN{10}(t;\alpha))+ \rho(t,\RN{10}(t;\alpha))=d(t),
$$
along each characteristic path \eqref{chpath}.
Suppose $u_{0x}(x) + \rho_0 (x) \geq 0$ for all $x\in \mathbb{R}$,  which corresponds to $d(0) \geq 0$ for all $\alpha$.   Lemma \ref{boundond} gives
\begin{align*}
0 \leq u_x(t, x) + \rho(t, x) & \leq \max\{d_0, \rho(t)\}.
\end{align*}
Note that 
$$
d_0= u_{0x}(\alpha ) + \rho_0 (\alpha) \leq ||u_{0x }||_\infty +\|\rho_0\|_\infty,
$$
and also 
$$
\rho(t):=\rho(t,\RN{10}(t,\alpha)) \leq ||\rho_0||_\infty e^{\beta ||u_0||_\infty t},
$$
where we used \eqref{densitybound}. Thus we obtain 
$$
||u_x(t,\cdot )||_\infty \leq  ||u_{0x}||_\infty + 2||\rho_0||_\infty e^{\beta ||u_0||_\infty t}.
$$
This when combined with the estimate in Theorem \ref{hsenergyestimate} ensures that smooth solution exists for all $t>0$. Now assume $u_{0x}(x_0) + \rho_0 (x_0) < 0$ for some $x_0\in\mathbb{R}$, that is $d_0=d(0; x_0)<0$.  
Lemma \ref{boundond} then gives that 
$$
(u_x+\rho)(t,\RN{10}(t,x_0))\to -\infty
$$ 
in finite time. Subsequently, $u_x\to -\infty$ and solution ceases to be in $H^s(\mathbb{R})$. This concludes the proof.
\end{proof}

\begin{remark} In the presence of $\epsilon$, the corresponding threshold condition for \eqref{scale} is that global smooth solution exists if and only if   
$$
u_{0x}(x) + \frac{1}{\epsilon} \rho_0 (x) \geq 0  \quad \forall  x\in\mathbb{R}. 
$$
It is worth pointing out that a smaller $\epsilon$ enables a larger range for $u_{0x}(x)$, indicating
the smoothing effects of the relaxation.  Interestingly, letting $\epsilon \to 0$ does not seem to yield the threshold condition (\ref{gg}) for the limiting system (\ref{localsys}). This signifies that the limit $\epsilon\to 0$ is singular. This is reminiscent of Euler-Poisson equations with constant background $c\geq 0$, where we do not recover the threshold condition of $c=0$ case on letting $c\to 0$. On the other hand, as $\epsilon \to \infty$, the above threshold reduces to   
$$
u_{0x}(x) \geq 0  \; \forall  x\in\mathbb{R}. 
$$
This is exactly the threshold condition for the inviscid Burgers' equation.
\end{remark}
\begin{remark} 
An attempt has been made to extend the study of critical thresholds to the two-dimensional model, yet the nonlocal coupling in system (\ref{hypMainwic}) brings subtle difficulties in obtaining  an effective control of both the velocity gradient matrix and the density gradient.  The techniques using spectral dynamics of \cite{LT02} for the Euler-Alignment system as presented in \cite{HT17, TT14} do not seem to be adaptable to the present situation. 
\end{remark}

\appendix
\section{A priori estimates in high norm}
We begin by stating some important lemmas which will be used in the proofs of Theorems \ref{hsenergyestimate} and \ref{local}. We will need the following commutator type estimate, \cite[Proposition 2.1]{M84}.
\begin{lemma}
\label{commest}
Let $f,\nabla g \in L^\infty $ and $D^{m-1}f\in L^2, D^m g\in L^2$ , $m$ being a positive integer. Then,
$$
||[D_\alpha(fg) - gD_\alpha f]|| \leq C (||f||_\infty ||D^m g|| + ||\nabla g||_\infty||D^{m-1} f||),
$$
where $\alpha$ is a multi-index with $|\alpha|\leq m$ and $C$ depends only on $m$.
\end{lemma}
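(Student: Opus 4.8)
The statement is the classical Moser--Kato--Ponce commutator inequality, and the plan is simply to reproduce the argument behind \cite[Proposition~2.1]{M84}; no new idea is needed, so I will only indicate the two main steps. First I would expand by the Leibniz rule,
\[
D_\alpha(fg) - g\,D_\alpha f = \sum_{\beta\le\alpha,\ \beta\ne\alpha}\binom{\alpha}{\beta}\,(D_\beta f)(D_{\alpha-\beta}g),
\]
so that every surviving term carries at least one derivative on $g$, i.e. $|\alpha-\beta|\ge 1$. In each term I would peel one derivative off $g$: pick a direction $i$ with $(\alpha-\beta)_i\ge1$ and write $D_{\alpha-\beta}g = D_{\alpha-\beta-e_i}h$ with $h:=\partial_{x_i}g$, so that $||h||_\infty\le||\nabla g||_\infty$ and $||D^{m-1}h||\le\sqrt{N}\,||D^m g||$. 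Writing $a:=|\beta|$ and $b:=|\alpha-\beta|-1$ (so $a+b=|\alpha|-1\le m-1$), this reduces the lemma to the product estimate
\[
\big|\big|(D^a f)(D^b h)\big|\big| \le C\big(||f||_\infty\,||D^{m-1}h|| + ||h||_\infty\,||D^{m-1}f||\big).
\]

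Second, I would bound each product by Hölder's inequality followed by the (dimension-free) Gagliardo--Nirenberg inequality $||D^j w||_{L^{2(m-1)/j}} \le C\,||w||_\infty^{1-j/(m-1)}\,||D^{m-1}w||^{j/(m-1)}$, applied with $j=a$ to $f$ and $j=b$ to $h$. In the top-order case $a+b=m-1$ the exponents $2(m-1)/a$ and $2(m-1)/b$ are Hölder-conjugate to $2$, and the product of the four resulting factors is
\[
||f||_\infty^{1-\theta_f}\,||D^{m-1}f||^{\theta_f}\,||h||_\infty^{1-\theta_h}\,||D^{m-1}h||^{\theta_h},
\qquad \theta_f=\tfrac{a}{m-1},\ \ \theta_h=\tfrac{b}{m-1},\ \ \theta_f+\theta_h=1 .
\]
Since $1-\theta_f=\theta_h$ and $1-\theta_h=\theta_f$, this equals $\big(||f||_\infty\,||D^{m-1}h||\big)^{\theta_h}\big(||h||_\infty\,||D^{m-1}f||\big)^{\theta_f}$, and Young's inequality with conjugate weights $\theta_h,\theta_f$ collapses it to $\theta_h\,||f||_\infty||D^{m-1}h|| + \theta_f\,||h||_\infty||D^{m-1}f||$, which is at most the desired right-hand side. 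The endpoint terms $a=0$ (hence $b=m-1$) and $b=0$ (hence $a=m-1$) are immediate: place the zero-order factor in $L^\infty$ — giving $||f||_\infty$ or $||\partial_i g||_\infty\le||\nabla g||_\infty$ — and the other in $L^2$. Summing the finitely many terms (a number depending only on $m$ and $N$) and using $||D^{m-1}h||\le\sqrt N\,||D^m g||$ then gives the claim.

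The one place requiring genuine care is the exponent bookkeeping in the second step: Hölder over $\mathbb{R}^N$ admits no slack, so the Gagliardo--Nirenberg exponents must be chosen so that the pairing closes \emph{exactly} at $L^2$; and the final Young step must be arranged so that it produces precisely the two advertised terms, not the spurious cross terms $||f||_\infty||D^{m-1}f||$ or $||h||_\infty||D^{m-1}h||$. Both of these hinge on the single identity $\theta_f+\theta_h=1$, which holds exactly because $a+b=m-1$. For multi-indices with $|\alpha|<m$ the same computation goes through verbatim with $m-1$ replaced by $|\alpha|-1$, at the cost of producing the lower-order norms $||D^{|\alpha|-1}f||$ and $||D^{|\alpha|-1}h||$; these are harmless in the context where the lemma is applied (namely $\rho_x,u_x\in H^m$), where they are controlled by interpolation.
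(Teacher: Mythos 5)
The paper does not actually prove this lemma: it is quoted verbatim from Majda \cite[Proposition 2.1]{M84}, so there is no in-paper argument to compare against. Your sketch is precisely the standard Moser-type proof behind that citation (Leibniz expansion minus the $\beta=\alpha$ term, peeling one derivative onto $h=\partial_{x_i}g$, Gagliardo--Nirenberg interpolation of the intermediate derivatives into $L^{2(m-1)/a}$ and $L^{2(m-1)/b}$, H\"older closing exactly at $L^2$, then Young with weights $\theta_f+\theta_h=1$), and the top-order bookkeeping you do is correct, including the endpoint cases $a=0$ and $b=0$. Two remarks on the caveats you raise yourself. First, your hesitation about $|\alpha|<m$ is well founded, but the issue sits in the statement rather than in your proof: with only the homogeneous norms $\|D^{m}g\|$, $\|D^{m-1}f\|$ on the right, the inequality is scale-inconsistent for $|\alpha|<m$ (take $\partial_i g\equiv 1$ on a ball of radius $R$ with a cutoff and $f$ a bump of width $R$; the left side grows like $R^{N/2}$, the right like $R^{N/2-1}$), so no argument can close that case as literally stated --- one needs either $|\alpha|=m$ or lower-order norms on the right. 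Second, your remedy is slightly misphrased: the lower-order terms $\|D^{|\alpha|-1}f\|$, $\|D^{|\alpha|-1}h\|$ are \emph{not} recoverable ``by interpolation'' from $\|f\|_\infty$ and the top $L^2$ seminorm (Gagliardo--Nirenberg lands in $L^p$ with $p>2$, not $L^2$); they are harmless in this paper only because every application (proof of Theorem \ref{hsenergyestimate} and Step 1 of Appendix B) immediately majorizes all orders between $1$ and $s+1$ by the full norms $\|\nabla\rho\|_{H^s}$, $\|\nabla\mathbf{u}\|_{H^s}$. A last cosmetic point: your constants (number of Leibniz terms, the $\sqrt N$ from reindexing $h$, and the Gagliardo--Nirenberg constants) carry a dependence on the dimension $N$ as well as on $m$, which is benign here but does not match the lemma's claim that $C=C(m)$ only.
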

We will also need the Fractional Leibniz rule, \cite[Proposition 2.1]{M84}.
\begin{lemma}
\label{fracleib}
Let $f,g\in L^\infty$ and $D^m f, D^m g \in L^2$, $m$ being a nonnegative integer. Then,
$$
||D_{\alpha} (fg)||\leq C\left( ||f||_\infty ||D^m g|| + ||g||_\infty||D^m f|| \right),
$$
where $\alpha$ is a multi-index with $|\alpha|\leq m$ and $C$ depends only on $m$.
\end{lemma}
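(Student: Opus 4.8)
The plan is to prove Lemma~\ref{fracleib} --- the Moser-type product estimate --- by elementary real-variable arguments, which suffice because $m$ is a nonnegative integer: namely the Leibniz rule, interpolation of the intermediate derivatives in $L^p$, and Young's inequality; no Littlewood--Paley decomposition is needed. First I would reduce to smooth $f,g$ by a routine density argument (approximate $f,g$ by Schwartz functions and pass to the limit in the finitely many norms that appear), and then carry out the estimate for $|\alpha|=m$, which is the case actually needed in the energy estimates later; the case $|\alpha|=k<m$ is handled by the identical computation with $m$ replaced by $k$.

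Next I would expand by the Leibniz rule,
$$
D_\alpha(fg)=\sum_{\beta\le\alpha}\binom{\alpha}{\beta}\,D_\beta f\;D_{\alpha-\beta}g,
$$
so that, by the triangle inequality, it suffices to estimate $\|D_\beta f\,D_{\alpha-\beta}g\|$ for each $\beta\le\alpha$ --- a sum whose length and binomial weights depend only on $m$ and the dimension. The two endpoint terms are immediate: for $|\beta|=0$ one has $\|f\,D_\alpha g\|\le\|f\|_\infty\|D^m g\|$, and for $|\beta|=m$, $\|g\,D_\alpha f\|\le\|g\|_\infty\|D^m f\|$. For an interior index $1\le j:=|\beta|\le m-1$ I would apply H\"older's inequality with the conjugate exponents $\tfrac{2m}{j}$ and $\tfrac{2m}{m-j}$ (whose reciprocals sum to $\tfrac12$), giving $\|D_\beta f\,D_{\alpha-\beta}g\|\le\|D^j f\|_{L^{2m/j}}\|D^{m-j}g\|_{L^{2m/(m-j)}}$.

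Then I would bound each of these two $L^p$-norms by the Gagliardo--Nirenberg interpolation inequalities on $\mathbb{R}^N$,
$$
\|D^j f\|_{L^{2m/j}}\le C\|f\|_\infty^{1-j/m}\|D^m f\|^{j/m},\qquad
\|D^{m-j}g\|_{L^{2m/(m-j)}}\le C\|g\|_\infty^{j/m}\|D^m g\|^{1-j/m}
$$
(the precise exponents being pinned down by scale invariance). Multiplying and regrouping gives
$$
\|D_\beta f\,D_{\alpha-\beta}g\|\le C\big(\|g\|_\infty\|D^m f\|\big)^{j/m}\big(\|f\|_\infty\|D^m g\|\big)^{1-j/m},
$$
and a final application of Young's inequality $a^\theta b^{1-\theta}\le\theta a+(1-\theta)b\le a+b$ with $\theta=j/m$ turns this into the bound $C\big(\|f\|_\infty\|D^m g\|+\|g\|_\infty\|D^m f\|\big)$. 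Summing the finitely many contributions yields the claim, with a constant depending only on $m$ and the dimension.

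The one ingredient that is not elementary, and the step I expect to demand the most care, is the Gagliardo--Nirenberg inequality carrying an $L^\infty$ factor, $\|D^j h\|_{L^{2m/j}}\le C\|h\|_\infty^{1-j/m}\|D^m h\|^{j/m}$ for $0\le j\le m$; I would either quote it from a standard reference (Nirenberg's interpolation inequalities, or the calculus inequalities in Majda--Bertozzi) or, if a self-contained proof is wanted, obtain it by induction on $m$ from the base case $\|\nabla h\|_{L^4}^2\le C\|h\|_\infty\|D^2 h\|$, which itself follows by integrating $\int|\nabla h|^4$ by parts once and invoking Cauchy--Schwarz. The other point needing attention is checking that the density reduction is legitimate under the weak hypotheses of the lemma ($h\in L^\infty$, $D^m h\in L^2$, with no decay at infinity assumed); the Leibniz combinatorics and the Young estimate are otherwise completely routine.
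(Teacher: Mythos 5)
The paper offers no proof of this lemma at all --- it is quoted directly from \cite[Proposition 2.1]{M84} --- so the relevant comparison is with the standard proof behind that citation, and your argument is exactly that proof: Leibniz expansion, H\"older with exponents $2m/j$ and $2m/(m-j)$, the Gagliardo--Nirenberg inequalities $\|D^j h\|_{L^{2m/j}}\leq C\|h\|_\infty^{1-j/m}\|D^m h\|_{L^2}^{j/m}$, and Young's inequality. For $|\alpha|=m$ every step you describe is sound (the exponents balance, the regrouping is correct, and your sketch of the base case $\|\nabla h\|_{L^4}^2\leq C\|h\|_\infty\|D^2 h\|$ by integration by parts is the standard induction seed), and the mollification reduction you flag is indeed the right way to handle the non-decaying hypotheses.

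There is, however, a genuine gap in your dismissal of the case $|\alpha|=k<m$: running ``the identical computation with $m$ replaced by $k$'' produces the bound $C\left(\|f\|_\infty\|D^k g\|+\|g\|_\infty\|D^k f\|\right)$, not the stated one, and under the hypotheses of the lemma ($f,g\in L^\infty$, $D^mf,D^mg\in L^2$ only) the intermediate norms $\|D^kf\|,\|D^kg\|$ are \emph{not} controlled by $\|f\|_\infty,\|g\|_\infty,\|D^mf\|,\|D^mg\|$; for instance $f(x)=\sin\sqrt{x}$ (smoothly cut off near the origin) is bounded with $f''\in L^2(\mathbb{R})$ but $f'\notin L^2(\mathbb{R})$. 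In fact the inequality as printed is false for $|\alpha|<m$: applying it to $f_\lambda(x)=f(\lambda x)$, $g_\lambda(x)=g(\lambda x)$ gives $\|D_\alpha(fg)\|\leq C\lambda^{m-|\alpha|}\left(\|f\|_\infty\|D^mg\|+\|g\|_\infty\|D^mf\|\right)$, which forces $D_\alpha(fg)\equiv 0$ as $\lambda\to0$; only $|\alpha|=m$ is scaling-consistent. So no proof could close this case --- the defect lies in the statement as transcribed from the literature --- and the honest fix is to prove exactly what you did: the estimate for $|\alpha|=m$, together with the $k$-matched version for $|\alpha|=k<m$. That version is all the paper ever needs, since in the energy estimates the lower-order terms $\|D^k\rho\|,\|D^ku\|$ with $1\leq k\leq s+1$ are absorbed into $\|\nabla\rho\|_{H^s},\|\nabla u\|_{H^s}$ anyway; but you should state this explicitly rather than claim the $k$-computation ``handles'' the lemma as written.
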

\textit{Proof of Theorem \ref{hsenergyestimate}:}
Hereinafter, let $C$ be a constant whose exact value changes along the lines but it depends only on $s$ and $\beta$.
Let $\alpha$ be a multi-index with $1\leq |\alpha|\leq s+1$. Operating \eqref{hypMain}(i) with $D_\alpha$, we obtain,
\begin{align*}
D_\alpha\rho_t & = -D_\alpha (\nabla\cdot(\rho Q\ast \mathbf{u}))\\
& = -[D_\alpha (\nabla\cdot(\rho Q\ast\mathbf{u})) - Q\ast\mathbf{u}\cdot \nabla(D_\alpha\rho)]- Q\ast\mathbf{u}\cdot \nabla(D_\alpha\rho).
\end{align*}
Consequently, we obtain
\begin{align*}
\frac{1}{2}\frac{d}{dt}\int (D_\alpha \rho )^2 \, dx & = -\int D_\alpha\rho [D_\alpha (\nabla\cdot(\rho Q\ast\mathbf{u})) - Q\ast\mathbf{u}\cdot \nabla(D_\alpha\rho)]\, dx \\
& - \int D_\alpha\rho\, [ Q\ast\mathbf{u}\cdot \nabla D_\alpha \rho] \, dx\\
& =: \RN{1} + \RN{2}.
\end{align*}   
We estimate each term individually. Integration by parts on $\RN{2}$, we have
\begin{align*}
\RN{2} & = -\frac{1}{2}\int\nabla(D_\alpha\rho)^2\cdot(Q\ast\mathbf{u})\, dx \\
& = \frac{1}{2} \int (D_\alpha\rho)^2\, \nabla\cdot (Q\ast\mathbf{u})\, dx.
\end{align*}
Noting that $Q\in W^{1,1}$, we have $||\nabla\cdot Q\ast\mathbf{u}||\leq C||\mathbf{u}||_\infty$. Therefore,
$$
|\RN{2}|\leq C ||\mathbf{u}||_\infty ||D_\alpha\rho||^2\leq C||\mathbf{u}||_\infty ||\nabla\rho ||_{H^s}^2.
$$
Next, we proceed to estimate $\RN{1}$ by Cauchy-Schwarz and a subsequent application of Lemma \ref{commest} with $f=\rho$, $g = Q\ast \mathbf{u}$ (component-wise) and $m=s+2$:
\begin{align*}
|\RN{1}| & \leq   ||D_\alpha \rho|| \left|\left| D_\alpha \nabla\cdot (\rho Q\ast \mathbf{u}) - (Q\ast \mathbf{u})\cdot\nabla D_\alpha \rho   \right|\right| \\
& \leq C ||D_\alpha\rho|| \left( ||D^{s+1}\mathbf{u}||\, ||\rho||_\infty + ||D^{s+1} \rho||\, ||\mathbf{u}||_\infty\right)\\
& \leq C ||\nabla\rho||_{H^s} \left( ||\nabla\mathbf{u}||_{H^s}||\rho||_\infty + ||\nabla\rho||_{H^s}||\mathbf{u}||_\infty \right)\\
& \leq C (||\mathbf{u}||_\infty + ||\rho||_\infty ) \left( ||\nabla \rho||_{H^s}^2 + ||\nabla \mathbf{u}||_{H^s}^2 \right).
\end{align*}
Putting together bounds for both $\RN{1}$ and $\RN{2}$ and further adding all inequalities for $\alpha$ such that $1\leq\alpha\leq s+1$, we obtain
\begin{align}
\label{rhohsbound}
& \frac{d}{dt}|| \nabla\rho(t)||^2_{H^s} \leq C \left( ||\mathbf{u}(t)||_\infty + ||\rho(t)||_\infty \right) \left( ||\nabla\rho(t)||^2_{H^s}+ ||\nabla\mathbf{u}(t)||^2_{H^s} \right).
\end{align}
In the above step and throughout the rest of this paper, we show the dependence on time after taking the norm in the space variable as above since it is clear from the context. Similar line of arguments can be followed to find the analogous bound on the derivative of $||\nabla\mathbf{u}||_{H^s}^2$, using \eqref{hypMain}(ii). Let us denote a fixed $i^{th}$ component of $\mathbf{u}$ as $u$. We have,
\begin{align*}
D_\alpha u_t & = D_\alpha[\rho(Q\ast u - u) ] - D_\alpha (\mathbf{u}\cdot \nabla u)\\
& = D_\alpha[\rho(Q\ast u - u) ] - [ D_\alpha(\mathbf{u}\cdot\nabla u) - \mathbf{u}\cdot \nabla D_\alpha u] - \mathbf{u}\cdot \nabla D_\alpha u.
\end{align*}
Consequently,
\begin{align*}
\frac{1}{2}\frac{d}{dt}\int (D_\alpha u)^2 \, dx & =  \int D_\alpha u\,  D_\alpha [\rho (Q\ast u - u)] \, dx \\
& - \int D_\alpha u[D_\alpha(\mathbf{u}\cdot\nabla u)-\mathbf{u}\cdot\nabla D_\alpha u]\, dx - \int (D_\alpha u)\mathbf{u}\cdot\nabla D_\alpha u\, dx \\
& =: \RN{1} + \RN{2} + \RN{3}. 
\end{align*}
We start with bounding $\RN{3}$, 
\begin{align*}
\RN{3} & = -\frac{1}{2}\int \nabla (D_\alpha u)^2 \cdot \mathbf{u}\, dx\\
& = \frac{1}{2}\int (D_\alpha u)^2\, \nabla\cdot\mathbf{u}\, dx.
\end{align*}
Therefore, $|\RN{3}|\leq C ||D_\alpha u||^2||\nabla\mathbf{u}||_\infty \leq C ||\nabla u||_{H^s}^2||\nabla\mathbf{u}||_\infty $.
Next we bound $\RN{1}$,
\begin{align*}
|\RN{1}| & \leq ||D_\alpha u||\, \left|\left| D_\alpha[\rho(Q\ast u-u)] \right|\right| \\
& \leq C ||D_\alpha u|| \left( ||\rho||_\infty ||D^{s+1} u|| + ||D^{s+1} \rho ||\, ||\mathbf{u}||_\infty  \right)\\
& \leq C ||\nabla u||_{H^s} \left( ||\rho||_\infty ||\nabla u||_{H^s} + ||\nabla \rho ||_{H^s}\, ||\mathbf{u}||_\infty  \right)\\
& \leq C \left( ||\rho||_\infty + ||\mathbf{u}||_\infty \right)\left( ||\nabla\rho||_{H^s}^2 + ||\nabla u||_{H^s}^2 \right).
\end{align*}
where we used Lemma \ref{fracleib} with $f=\rho$, $g=Q\ast u-u$ and $m=s+1$. To bound $\RN{2}$, we make use of Lemma \ref{commest} with $f = \nabla u$ and $g = \mathbf{u}$. Consequently,
\begin{align*}
|\RN{2}| & \leq C ||D_\alpha u||\left|\left| D_\alpha(\mathbf{u}\cdot\nabla u) - \mathbf{u}\cdot\nabla D_\alpha u \right|\right|\\
& \leq C ||D_\alpha u||\left( ||\nabla u||_\infty ||D^{s+1} \mathbf{u}|| + ||\nabla\mathbf{u}||_\infty ||D^{s+1} u|| \right)\\
& \leq C ||D_\alpha u||\, ||\nabla \mathbf{u}||_\infty ||\nabla \mathbf{u}||_{H^s}\\
& \leq C ||\nabla\mathbf{u}||_\infty ||\nabla\mathbf{u}||_{H^s}^2.
\end{align*}
Combining all the three bounds for $\RN{1}, \RN{2}, \RN{3}$, and adding these inequalities for all the components of $\mathbf{u}$ and $\alpha$ such that $1\leq |\alpha|\leq s+1$, we immediately obtain,
\begin{align}
\label{uhsbound}
& \frac{d}{dt}||\nabla \mathbf{u}(t)||^2_{H^s} \leq C\left( ||\nabla\mathbf{u}(t)||_\infty + ||\rho(t)||_\infty + ||\mathbf{u}(t)||_\infty \right) \left( ||\nabla\rho(t)||^2_{H^s}+ ||\nabla\mathbf{u}(t)||^2_{H^s} \right).
\end{align}
Finally adding (\ref{rhohsbound}) and (\ref{uhsbound}), 
\begin{align*}
\frac{dY}{dt} \leq C\left( ||\rho(t)||_\infty + ||\nabla\mathbf{u}(t)||_\infty + ||\mathbf{u}(t)||_\infty \right)Y,
\end{align*}
where $C$ depends only on $\beta$ and $s$. Upon integration, we get the desired result.\\
\hfill \qed

\section{Local well-posedness} 
Here, we will prove Theorem \ref{local}. The idea is to use the typical iteration technique with the given initial data as the first guess. We will need the following lemma.
\begin{lemma}
\label{intp}
Suppose $\{f^k(t,x)\}_{k=1}^\infty$ be a sequence of continuous functions with  
$$\sup_{[0,T]} ||\nabla f^k(t,\cdot )||_{H^s(\mathbb{R}^N)}\leq \kappa
$$ 
with $\kappa,T$ being positive constants and $s\geq 1$ is an integer. Also 
$$
f^k\to f \text{ in } C([0,T];L^\infty (\mathbb{R}^N)).
$$ 
Then $D^1 f\in C([0,T];H^{s-1}(\mathbb{R}^N))$ and
$$
D^1 f^k\to D^1 f\ in\ C([0,T];H^{s-1}(\mathbb{R}^N)). 
$$
\end{lemma}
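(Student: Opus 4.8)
The strategy is a standard interpolation argument. The gradients $\nabla f^k$ are uniformly bounded in the high norm $H^s(\mathbb{R}^N)$, while the functions $f^k$ converge in the low norm $L^\infty(\mathbb{R}^N)$; interpolating between these two facts should force $\{\nabla f^k\}$ to be Cauchy in the intermediate space $H^{s-1}(\mathbb{R}^N)$, uniformly in $t$, and passing to the limit then identifies the limit with $\nabla f$.

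First I would fix $t\in[0,T]$ and work with the difference $g:=f^k(t,\cdot)-f^\ell(t,\cdot)$. The key ingredient is a Gagliardo--Nirenberg type interpolation inequality: for each $j$ with $1\le j\le s$, $||D^j g||$ is bounded by a product $||g||_\infty^{1-\theta_j}\,||\nabla g||_{H^s}^{\theta_j}$ (possibly plus a lower order term of the form $C||g||_\infty$), with exponents $\theta_j\in(0,1)$ depending only on $j,s,N$. By hypothesis $||\nabla g||_{H^s}\le ||\nabla f^k(t,\cdot)||_{H^s}+||\nabla f^\ell(t,\cdot)||_{H^s}\le 2\kappa$, and since $\{f^k\}$ is Cauchy in $C([0,T];L^\infty)$ we have $||g||_\infty\le \varepsilon_{k,\ell}:=\sup_{[0,T]}||f^k(t,\cdot)-f^\ell(t,\cdot)||_\infty$, with $\varepsilon_{k,\ell}\to 0$ as $k,\ell\to\infty$. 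Summing the interpolation bounds over $1\le j\le s$ gives $||\nabla g||_{H^{s-1}}\le C\big(\kappa^{1-\theta}\varepsilon_{k,\ell}^{\theta}+\varepsilon_{k,\ell}\big)$ for some $\theta\in(0,1)$ and $C=C(s,N)$, and the right-hand side is independent of $t$. Hence $\{\nabla f^k\}$ is Cauchy in $C([0,T];H^{s-1})$ and converges there to some $h\in C([0,T];H^{s-1})$.

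It then remains to identify $h$ with $\nabla f$. For every fixed $t$, $f^k(t,\cdot)\to f(t,\cdot)$ uniformly, hence in $\mathcal{D}'(\mathbb{R}^N)$, so $\nabla f^k(t,\cdot)\to\nabla f(t,\cdot)$ in $\mathcal{D}'(\mathbb{R}^N)$; on the other hand $\nabla f^k(t,\cdot)\to h(t,\cdot)$ in $H^{s-1}(\mathbb{R}^N)$, hence also in $\mathcal{D}'(\mathbb{R}^N)$. By uniqueness of distributional limits, $h(t,\cdot)=\nabla f(t,\cdot)$ for all $t\in[0,T]$. Thus $D^1 f=h\in C([0,T];H^{s-1}(\mathbb{R}^N))$ and $D^1 f^k\to D^1 f$ in $C([0,T];H^{s-1}(\mathbb{R}^N))$, which is the assertion.

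I expect the main obstacle to be the interpolation inequality itself, in a form that is uniform in $t$ and valid in the full inhomogeneous $H^{s-1}$ norm over all of $\mathbb{R}^N$. On a bounded domain the passage from an $L^\infty$ bound plus a top order Sobolev bound to convergence in an intermediate norm is classical (Ehrling's lemma together with Rellich--Kondrachov), but on $\mathbb{R}^N$ compactness is lost and the low order derivatives of $g$ must be controlled directly; here one has to use that it is exactly $\nabla f^k$ --- not $f^k$ --- that is bounded in $H^s$, and combine this with the uniform $L^\infty$ smallness of $g$. Once that estimate is in hand, the remaining steps (the Cauchy property, the identification of the limit, and the resulting continuity in $t$ into $H^{s-1}$) are routine.
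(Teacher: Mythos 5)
Your argument hinges on an interpolation inequality of the form $\|D^j g\|_{L^2(\mathbb{R}^N)}\le C\big(\|g\|_\infty^{1-\theta}\|\nabla g\|_{H^s}^{\theta}+\|g\|_\infty\big)$ with $\theta\in(0,1)$, and this inequality is false on all of $\mathbb{R}^N$. Take $g=\epsilon\sum_{i=1}^K\chi(\cdot-x_i)$, a sum of $K$ widely separated copies of a fixed bump $\chi$: then $\|g\|_\infty=\epsilon$, while every derivative norm scales like $\epsilon\sqrt{K}$, so the left side is of order $\epsilon\sqrt K$ but the right side is of order $\epsilon K^{\theta/2}+\epsilon$, and the ratio blows up like $K^{(1-\theta)/2}$. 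The valid Gagliardo--Nirenberg inequality with an $L^\infty$ low norm gives $\|D^j g\|_{L^{2m/j}}\le C\|g\|_\infty^{1-j/m}\|D^m g\|_{L^2}^{j/m}$, i.e.\ a \emph{higher} Lebesgue exponent than $2$, and on an unbounded domain you cannot pass from $L^{2m/j}$ down to $L^2$. Worse, the same bump example (with $\epsilon_k\to0$, $\epsilon_k\sqrt{K_k}=1$) shows that the qualitative conclusion you are after cannot be reached by any argument that uses only the two norm hypotheses globally: one can have $\|g\|_\infty\to0$ and $\|\nabla g\|_{H^s}$ bounded while $\|\nabla g\|_{H^{s-1}(\mathbb{R}^N)}$ stays bounded away from $0$, because the $H^{s-1}$ mass can spread out to spatial infinity. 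So the step ``hence $\{\nabla f^k\}$ is Cauchy in $C([0,T];H^{s-1})$'' does not follow; this is a genuine gap, not a technical one, and it is exactly the obstacle you flagged in your last paragraph but then did not overcome.

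The paper's proof avoids the false global inequality by localizing: on a ball $U=B_M(0)$ it uses the Gilbarg--Trudinger interpolation $\|D_\alpha g\|_{L^2(U)}\le\epsilon\|g\|_{H^{s+1}(U)}+C(\epsilon)\|g\|_{L^2(U)}$, which is legitimate there because $\|g\|_{L^2(U)}\lesssim_M\|g\|_\infty$, and this yields the Cauchy property in $C([0,T];H^{s-1}(U))$ on every fixed ball. The passage to all of $\mathbb{R}^N$ is then handled by a separate far-field step (choosing $M_0$ large so that the exterior contribution of $|D_\alpha f^m|^2+|D_\alpha f^n|^2$ is below $\delta$), i.e.\ by a tightness-type control of the tails of the derivatives, which is precisely the extra input your interpolation-only argument lacks. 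If you want to repair your proof you must either restrict the intermediate estimate to bounded sets and then supply a uniform-in-$k$ decay estimate for $D^1 f^k$ at infinity (as the paper's application to the iterates $\rho^k,\mathbf{u}^k$ is meant to provide), or work with a weighted/local version of $H^{s-1}$; the identification of the limit with $\nabla f$ via distributions at the end of your proposal is fine and matches what is implicitly done in the paper.
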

\begin{proof}
Let $U=B_M(0)$ (ball of radius $M$ around origin) for $M>0$ fixed. Throughout this proof, $C_1$ is a constant that may change along the lines but only depend on $s$ and $M$. We will use the following interpolation inequality \cite[Theorem 7.28]{GT01}: for all $\epsilon >0$ and any multi-index $\alpha$ with $1\leq |\alpha| \leq s$,
\begin{align*}
||D_\alpha(f^m-f^n)||_{L^2(U)} & \leq \epsilon ||f^m-f^n||_{H^{s+1}(U)} + \frac{C_1}{\epsilon^{\frac{|\alpha|}{s+1-|\alpha|}}}||f^m-f^n||_{L^2(U)}\\
& \leq \epsilon||\nabla (f^m-f^n)||_{H^s(U)} + \epsilon ||f^m - f^n||_{L^2(U)} + \frac{C_1}{\epsilon^{\frac{|\alpha|}{s+1-|\alpha|}}}||f^m-f^n||_{L^2(U)}\\
& \leq 2\epsilon\kappa + \left( \epsilon + \frac{C_1}{\epsilon^{\frac{|\alpha|}{s+1-|\alpha|}}} \right)||f^m-f^n||_{L^2(U)}.
\end{align*}
Therefore, for any $\epsilon < 1$,
\begin{align*}
\sup_{[0,T]} ||D_\alpha(f^m-f^n)||_{L^2(U)} & \leq 2\epsilon \kappa + C_1\left( \epsilon + \frac{1}{\epsilon^{s+1}} \right) \sup_{[0,T]} ||(f^m-f^n)(t)||_\infty \\
& \leq 2\epsilon \kappa + \frac{C_1}{\epsilon^{s+1}} \sup_{[0,T]} ||(f^m-f^n)(t)||_\infty. 
\end{align*}
Letting $m,n\to\infty$, we get
$$
\lim_{m,n\to\infty} \sup_{[0,T]}||D_\alpha(f^m-f^n)(t)||_{L^2(U)}\leq 2\epsilon\kappa ,\quad \forall \epsilon\in (0,1).
$$
Since $1\leq |\alpha|\leq s$, we conclude that $\{D^1 f^k\}_{k=1}^\infty$ is Cauchy in $C([0,T];H^{s-1}(U))$. Consequently, $D^1 f^k\to D^1 f$ in $C([0,T];H^{s-1}(U))$. Lastly, we complete the proof by showing that the sequence is indeed Cauchy in $C([0,T];H^{s-1}(\mathbb{R}^N))$. To this end, fix a $\delta >0$. For any multi-index $\alpha$ with $1\leq |\alpha| \leq s$,
\begin{align*}
\int_{\mathbb{R}^N}|D_\alpha(f^m-f^n)|^2 dx & = \int_{B_M(0)} \!\!\!\!\!\! |D_\alpha(f^m-f^n)|^2 dx + \int_{\mathbb{R}^N\backslash B_M(0)}\!\!\!\!\!\! \!\!\!\!\!\! |D_\alpha(f^m-f^n)|^2 dx \\
& \leq \int_{B_M(0)} \!\!\!\!\!\! |D_\alpha(f^m-f^n)|^2 dx + 2\left(\int_{\mathbb{R}^N\backslash B_M(0)}\!\!\!\!\!\! \!\!\!\!\!\! |D_\alpha f^m|^2 +|D_\alpha f^n|^2 dx\right) \\
& \leq \sup_{[0,T]}\int_{B_{M_0}(0)} \!\!\!\! \!  |D_\alpha (f^m-f^n)(t)|^2 dx + \delta, 
\end{align*}
for sufficiently large $M_0$. Letting $m,n\to\infty$,
$$
\lim_{m,n\to\infty}\sup_{[0,T]}\int_{B_{M_0}(0)} \!\!\!\! \! \!\!\! \! |D_\alpha (f^m-f^n)(t)|^2\, dx\leq \delta,
$$
and this holds for any $\delta >0$. Hence, $D^1 f^k\to D^1 f$ in $C([0,T];H^{s-1}(\mathbb{R}^N))$.
\end{proof}
The rest of the proof is divided into four steps. We begin by setting up the iteration system and introducing some notation.
Let $\rho^0 = \rho_0 (x)$ and $\mathbf{u^0} = \mathbf{u_0}(x)$. For $k\geq 0$, we update $(\rho^k, \mathbf{u}^k)$ 
by the following scheme,  
\begin{align}
\label{itersch}
\begin{aligned}
& \rho_t^{k+1} + \nabla\cdot\left( \rho^{k+1} Q\ast \mathbf{u}^k \right) = 0,\\
& \mathbf{u}_t^{k+1} + (\mathbf{u}^k\cdot\nabla ) \mathbf{u}^{k+1} = \rho^{k}\left(  Q\ast \mathbf{u}^{k+1} - \mathbf{u}^{k+1} \right),
\end{aligned}
\end{align}
with $\rho^{k+1}(0,x) = \rho_0 (x)$ and $\mathbf{u}^{k+1}(0,x) = \mathbf{u_0}(x)$.  The $N+1$ equations in this system 
are decoupled and, therefore, for given smooth functions $\rho^k, \mathbf{u}^k$ and smooth initial data, there always exists a unique classical solution for this system. It suffices to estimate the solution in terms of $\rho^k, \mathbf{u}^k$. In all the four steps of the proof, we denote 
\begin{align*}
& \rho:=\rho^{k+1},\quad \mathbf{u}:=\mathbf{u}^{k+1},\quad w:=\rho^k,\quad \mathbf{v}:=\mathbf{u}^k,\\ 
& \tilde{\rho} := \rho^{k+1}-\rho^k,\quad \tilde{\mathbf{u}}:= \mathbf{u} ^{k+1}-\mathbf{u} ^k,\quad \tilde{w} := \rho^{k}-\rho^{k-1},\quad \tilde{\mathbf{v}} := \mathbf{u} ^{k}-\mathbf{u} ^{k-1},
\end{align*}
for the sake of notational simplicity. Also, $u,v,\tilde{u},\tilde{v}$ will be used to denote a single component of the corresponding vector in boldface.

\textbf{Step 1:} \textit{Uniform Bounds in High Norm.}  
For fixed $R>0$, assume that the  initial data satisfy 
$$
\max\{ ||\rho_0||_\infty,  ||\nabla\rho_{0} ||_{H^s}, ||\mathbf{u_0}||_\infty,  ||\nabla \mathbf{u_{0}} ||_{H^s} \} \leq R/2.
$$
By induction, we will show that there exists $T>0$ such that for all $k\geq 1$, 
\begin{align}
\label{munibd}
\max\left\{ \sup_{[0,T]} ||\rho^k(t)||_\infty , \sup_{[0,T]} ||\nabla \rho^k(t) ||_{H^s}, \sup_{[0,T]} ||\mathbf{u}^k(t)||_\infty , \sup_{[0,T]} ||\nabla\mathbf{u}^k(t) ||_{H^s} \right\}\leq R.
\end{align}
 Assume this already holds for $k$, i.e.,   
\begin{align}
\label{boundassumpn}
\max\left\{ \sup_{[0,T]} ||w(t)||_\infty , \sup_{[0,T]} ||\nabla w(t) ||_{H^s}, \sup_{[0,T]} ||\mathbf{v}(t)||_\infty ,\sup_{[0,T]} ||\nabla \mathbf{v}(t) ||_{H^s} \right\}\leq R.
\end{align}
Since $\mathbf{v}=\mathbf{u}^k$ is Lipschitz continuous in $x$, we can apply Lemma \ref{transbd} to
\eqref{itersch}(i) to obtain 
\begin{align}\label{winfb}
\rho \leq ||\rho_0||_\infty e^{\beta R t} \leq R,
\end{align}
where we used \eqref{boundassumpn}, and took $t\in [0,T_1]$ with $T_1 =  \frac{1}{\beta R}\ln\left( \frac{R}{||\rho_0||_\infty}  \right)$.

Hereinafter, $C$ is a constant which changes along lines but depends on $s$, $\beta$ and $R$ only. For $H^s$ bound, we study the evolution of the $D_\alpha \rho$ for a multi-index $\alpha$ with $1\leq |\alpha|\leq s+1$, similar to what we did in the proof of Theorem \ref{hsenergyestimate} above. Using \eqref{itersch}(i),
\begin{align*}
\frac{1}{2}\frac{d}{dt}\int |D_\alpha\rho|^2\, dx & = - \int (D_\alpha\rho)  D_\alpha (\nabla\cdot(\rho Q\ast \mathbf{v}))\, dx\\
& = -\int D_\alpha\rho [D_\alpha (\nabla\cdot(\rho Q\ast\mathbf{v})) - Q\ast\mathbf{v}\cdot \nabla(D_\alpha\rho)]\, dx \\
& - \int D_\alpha\rho\, [ Q\ast\mathbf{v}\cdot \nabla D_\alpha \rho] \, dx\\
& \leq C ||D_\alpha\rho||\left(||\rho||_\infty||D^{s+1} \mathbf{v}|| + || \mathbf{v}||_\infty ||D^{s+1} \rho||\right) + \frac{\beta}{2}||\mathbf{v}||_\infty ||D_\alpha\rho||^2\\
& \leq C\left( ||\rho||_\infty + ||\mathbf{v}||_\infty \right) \left( ||\nabla\rho||^2_{H^s} + ||\nabla\mathbf{v}||^2_{H^s} \right)
\end{align*}
We used Lemma \ref{commest} to obtain the second to last inequality and used \eqref{boundassumpn} and Young's inequality for the last one. Consequently, 
$$
\frac{d}{dt}||D_\alpha\rho(t)||^2 \leq C\left(||\nabla\rho(t)||^2_{H^s} + 1\right).
$$
Adding the inequalities for all multi-indices $\alpha$, we obtain 
$$
\frac{d}{dt} ||\nabla\rho(t)||_{H^s}^2 \leq C(||\nabla\rho(t)||_{H^s}^2 + 1), 
$$
which upon integration gives 
\begin{align*}
||\nabla\rho(t)||_{H^s}^2 \leq e^{C t}(||\nabla\rho_{0}||_{H^s}^2 + 1)-1.
\end{align*}
Choosing $T_2 = \frac{1}{C}\ln\left( \frac{1+R^2}{1+||\nabla\rho_{0}||_{H^s}^2}  \right)$, we ensure,
\begin{align}
\label{wunibd}
\sup_{[0,T_2]}||\nabla\rho(t)||_{H^s}\leq R.
\end{align}
As for $\mathbf{u}$ we have from a single equation in \eqref{itersch}(ii), 
$$
u_t + \mathbf{v}\cdot(\nabla u) = w(Q\ast u-u).
$$
A very similar argument as in proof for the maximum principle (Proposition \ref{bdds}) leads to the following conclusion,
\begin{align}
\label{vinfb}
||\mathbf{u}(t)||_\infty \leq ||\mathbf{u_0}||_\infty\leq R,\quad \forall t\geq 0.
\end{align}
Next, for $1\leq |\alpha|\leq s+1$ consider,
\begin{align*}
\frac{1}{2}\frac{d}{dt}\int |D_\alpha u|^2\, dx & = \int  (D_\alpha u) D_\alpha (-\mathbf{v}\cdot\nabla u + w(Q\ast u-u))\, dx\\
& = -\int (D_\alpha u) [D_\alpha(\mathbf{v}\cdot\nabla u)-\mathbf{v}\cdot \nabla D_\alpha u]\,dx + \frac{1}{2}\int (\nabla\cdot \mathbf{v}) (D_\alpha u)^2\, dx\\
& + \int D_\alpha u\, D_\alpha (w(Q\ast u-u))\, dx\\
& =: \RN{1}+\RN{2}+\RN{3}. 
\end{align*}
We use Lemma \ref{commest} to control $\RN{1}$, Sobolev embedding theorem to control $||\nabla\cdot v||_\infty$ in $\RN{2}$ and Lemma \ref{fracleib} to control $\RN{3}$. Consequently,
\begin{align*}
\frac{d}{dt} ||(D_\alpha u)(t)||^2 \leq C(||\nabla u(t)||_{H^s}^2 + 1).
\end{align*} 
Summing over all multi-indices $\alpha$ and components of $\mathbf{u}$, we obtain
\begin{align*}
\frac{d}{dt} ||\nabla\mathbf{u}(t)||_{H^s}^2 \leq C(||\nabla \mathbf{u}(t)||_{H^s}^2 + 1).
\end{align*}
This yields 
\begin{align*}
||\nabla\mathbf{u}(t)||_{H^s}^2 \leq e^{C t}(||\nabla\mathbf{u_{0}}||_{H^s}^2 + 1)-1.
\end{align*}
Choosing $T_3 = \frac{1}{C}\ln\left( \frac{1+R^2}{1+||\nabla \mathbf{u_{0}}||_{H^s}^2}  \right)$, we ensure, 
\begin{align}
\label{vunibd}
\sup_{[0,T_3]}||\nabla\mathbf{u}(t)||_{H^s}\leq R.
\end{align}
Hence, for $T=\min \{T_1,T_2,T_3\}$, and using \eqref{winfb}-\eqref{vunibd}, we have 
\begin{align*}
\max\left\{ \sup_{[0,T]} ||\rho(t)||_\infty , \sup_{[0,T]} ||\nabla\rho(t) ||_{H^s}, \sup_{[0,T]} ||\mathbf{u}(t)||_\infty , \sup_{[0,T]} ||\nabla \mathbf{u}(t) ||_{H^s} \right\}\leq R.
\end{align*}
By induction, we obtain \eqref{munibd}, therefore, concluding Step 1.\\

\textbf{Step 2:} \textit{Cauchy Sequence in Infinity Norm.} Here we prove $(\rho^k, \mathbf{u}^k)$ is a Cauchy sequence 
in order to guarantee that there exists $(\rho, \mathbf{u})$ as its limit as $k \to \infty$. More precisely we have the following lemma. 
\begin{lemma}\label{lemc}
There exists $T^*\leq T$ such that 
\begin{align}
\label{contrainfn}
\begin{aligned}
& \max\left\{  \sup_{[0,T^*]} ||\rho^{k+1}(t) - \rho^{k}(t)||_\infty,\,   \sup_{[0,T^*]} ||\mathbf{u}^{k+1}(t) - \mathbf{u}^{k}(t)||_\infty  \right\}\\
\leq & \ \frac{1}{2} \max\left\{  \sup_{[0,T^*]} ||\rho^{k}(t) - \rho^{k-1}(t)||_\infty,\,  \sup_{[0,T^*]} ||\mathbf{u}^{k}(t) - \mathbf{u}^{k-1}(t)||_\infty  \right\},
\end{aligned}
\end{align}
for $k=1, 2, \cdots.$
\end{lemma}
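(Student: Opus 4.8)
The plan is to derive evolution equations for the consecutive differences $\tilde\rho=\rho^{k+1}-\rho^k$ and $\tilde{\mathbf u}=\mathbf{u}^{k+1}-\mathbf{u}^k$ by subtracting the scheme \eqref{itersch} taken at levels $k$ and $k-1$, and then to close a Gr\"onwall-type inequality for
\[
N_{k+1}(T^*):=\max\Big\{\sup_{[0,T^*]}||\tilde\rho(t)||_\infty,\ \sup_{[0,T^*]}||\tilde{\mathbf u}(t)||_\infty\Big\}
\]
in terms of $N_k(T^*)$, using the uniform high-norm bounds \eqref{munibd} from Step 1. Since every iterate carries the same initial data, $\tilde\rho(0,\cdot)=0$ and $\tilde{\mathbf u}(0,\cdot)=0$, which is precisely what forces a factor of $T^*$ on the right-hand sides and ultimately makes the contraction constant small.

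For the density, subtracting the two continuity equations and writing $\rho^{k+1}Q\ast\mathbf{u}^k-\rho^kQ\ast\mathbf{u}^{k-1}=\tilde\rho\,Q\ast\mathbf{u}^k+\rho^k\,Q\ast\tilde{\mathbf v}$ gives
\[
\tilde\rho_t+\nabla\cdot(\tilde\rho\,Q\ast\mathbf{u}^k)=-\nabla\cdot\big(\rho^k\,Q\ast\tilde{\mathbf v}\big),
\]
which has exactly the form handled by Lemma \ref{transbd}, with $f=\tilde\rho$, $\mathbf g=Q\ast\mathbf{u}^k$ and $h=-\nabla\cdot(\rho^k\,Q\ast\tilde{\mathbf v})$. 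The crucial observation is that no derivative of $\tilde{\mathbf v}$ is needed: since $\nabla\cdot(Q\ast\tilde{\mathbf v})=(\nabla Q)\ast\tilde{\mathbf v}$, one gets $||h||_\infty\le\big(||\nabla\rho^k||_\infty\,||Q||_{L^1}+||\rho^k||_\infty\,||\nabla Q||_{L^1}\big)||\tilde{\mathbf v}||_\infty\le C(R)||\tilde{\mathbf v}||_\infty$, where $||\nabla\rho^k||_\infty\le C||\nabla\rho^k||_{H^s}\le CR$ by Sobolev embedding ($s>N/2$) together with \eqref{munibd}. Combined with $||\nabla\cdot(Q\ast\mathbf{u}^k)||_\infty\le\beta||\mathbf{u}^k||_\infty\le\beta R$, Lemma \ref{transbd} and $\tilde\rho(0,\cdot)=0$ yield
\[
\sup_{[0,T^*]}||\tilde\rho(t)||_\infty\le C(R)\,T^*e^{\beta R\,T^*}\sup_{[0,T^*]}||\tilde{\mathbf v}(t)||_\infty .
\]

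For the velocity I would subtract the $i$-th component equations at levels $k$ and $k-1$, which produces
\[
\tilde u_t+(\mathbf{u}^k\cdot\nabla)\tilde u=\rho^k\big(Q\ast\tilde u-\tilde u\big)-\tilde{\mathbf v}\cdot\nabla u^k_i+\tilde w\,\big(Q\ast u^k_i-u^k_i\big),
\]
a transport equation along the flow of $\mathbf{u}^k$, which is globally Lipschitz in $x$ (again by \eqref{munibd} and Sobolev embedding) and hence generates a well-defined homeomorphism for each $t$. The source is bounded by $C(R)\big(||\tilde u||_\infty+||\tilde{\mathbf v}||_\infty+||\tilde w||_\infty\big)$ using $||\nabla\mathbf{u}^k||_\infty\le CR$ and $||\mathbf{u}^k||_\infty\le R$. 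Integrating along characteristics from the zero datum, taking the supremum over characteristics (which sweep out all of $\mathbb{R}^N$), summing over components, and applying Gr\"onwall's inequality to absorb the $||\tilde{\mathbf u}||_\infty$ contribution gives
\[
\sup_{[0,T^*]}||\tilde{\mathbf u}(t)||_\infty\le C(R)\,T^*e^{C(R)T^*}\,N_k(T^*).
\]

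Putting the two estimates together yields $N_{k+1}(T^*)\le\phi(T^*)\,N_k(T^*)$ with $\phi(T^*)=C(R)\,T^*e^{C(R)T^*}\to0$ as $T^*\to0^+$; choosing $T^*\le T$ small enough that $\phi(T^*)\le\tfrac12$ gives \eqref{contrainfn}. I expect the only genuinely delicate point to be bookkeeping: every estimate must be carried out at the $L^\infty$ level \emph{without} differentiating the ``older'' differences $\tilde w,\tilde{\mathbf v}$, which is exactly where the convolution structure (the identity $\nabla(Q\ast f)=(\nabla Q)\ast f$ with $\nabla Q\in L^1$) and the uniform $H^s$ bounds of Step 1 — which keep the transport coefficient $\mathbf{u}^k$ and the gradients $\nabla\rho^k,\nabla\mathbf{u}^k$ uniformly bounded — are indispensable.
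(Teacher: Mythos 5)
Your proposal is correct and follows the same overall route as the paper: subtract \eqref{itersch} at consecutive levels, use the decomposition $\rho^{k+1}Q\ast\mathbf{u}^k-\rho^kQ\ast\mathbf{u}^{k-1}=\tilde\rho\,Q\ast\mathbf{u}^k+\rho^k Q\ast\tilde{\mathbf v}$ together with Lemma \ref{transbd} and the zero initial difference to get the $\tilde\rho$ bound (the paper's bound $C(e^{\beta Rt}-1)\sup\|\tilde{\mathbf v}\|_\infty$ is your $C T^*e^{\beta RT^*}$ bound), estimate $\tilde{\mathbf u}$ along the characteristics of $\mathbf{u}^k$, and choose $T^*$ small, uniformly in $k$ since all constants depend only on $R,\beta,s$. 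The one place you genuinely diverge is the absorption of the nonlocal relaxation term $\rho^k(Q\ast\tilde{\mathbf u}-\tilde{\mathbf u})$: you bound it by $2R\|\tilde{\mathbf u}(t)\|_\infty$ using $\|Q\ast f\|_\infty\le\|f\|_\infty$ (valid by $Q\ge0$, $\int Q=1$ from \eqref{qhyp}), integrate along characteristics, take the supremum (the flow of $\mathbf{u}^k$ is onto by its uniform Lipschitz bound from \eqref{munibd} and Sobolev embedding), and close with an integral Gr\"onwall inequality; the paper instead sandwiches $d\tilde{\mathbf u}/dt$ as in \eqref{doubin}, introduces the spatially constant comparison function $\eta$ of \eqref{etaaux}, and shows $|\tilde{\mathbf u}|\le\eta\mathbf{1}$ by applying the Proposition \ref{bdds}-type maximum principle to $\boldsymbol\xi=\tilde{\mathbf u}\mp\eta\mathbf{1}$, exploiting that $\int Q=1$ makes $\eta$ drop out of the convolution. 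Your Gr\"onwall route is shorter and more standard (one only needs $t\mapsto\|\tilde{\mathbf u}(t)\|_\infty$ bounded and measurable, which holds for the continuous bounded iterates); the paper's comparison-function argument buys a pointwise two-sided bound $-\eta\mathbf{1}\le\tilde{\mathbf u}\le\eta\mathbf{1}$ without ever moving the unknown sup norm under the time integral, at the cost of rerunning the maximum-principle argument. Either way one lands at $N_{k+1}(T^*)\le\tfrac12 N_k(T^*)$, i.e.\ \eqref{contrainfn}.
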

Then in an augmented Banach space 
$$
S = \left\{ (w,\mathbf{v}): (w,\mathbf{v})\in (C([0,T]; C_b(\mathbb{R}^N)))^{N+1},\ w\geq 0  \right\}
$$ 
with
$$
||(w,\mathbf{v})||_S = \max\{ \sup_{[0,T]} ||w(t)||_\infty,\, \sup_{[0,T]} ||\mathbf{v}(t)||_\infty \},
$$ 
$(\rho^k, \mathbf{u}^k)$ is convergent in the sense that $(\rho^k ,\mathbf{u}^k)\to (\rho ,\mathbf{u})$ in $C([0,T^*];L^\infty (\mathbb{R}^N))$.
Hence, the only remaining concern after this step would be to show that the limit functions $(\rho, \mathbf{u})$ is indeed a classical solution to \eqref{hypMainwic}, with higher regularity of this solution. We perform the aforementioned analysis in the next step. Here, we will show \eqref{contrainfn}. 
\begin{proof}
Taking difference \eqref{itersch}(i) for $k+1$ and $k$, we obtain 
$$
\tilde{\rho}_t + \nabla\cdot(\tilde{\rho}Q\ast \mathbf{v}) = -\nabla\cdot (wQ\ast \tilde{\mathbf{v}}).
$$
Using Lemma \ref{transbd}, we have  
$$
||\tilde{\rho}(t)||_\infty \leq \int_0^t e^{\int_s^t ||\nabla\cdot Q\ast\mathbf{v}(\tau)||_\infty d\tau}||(\nabla w)\cdot Q\ast \tilde{\mathbf{v}} + w\nabla\cdot Q\ast \tilde{\mathbf{v}}||_\infty ds,
$$
where we used the fact that $\tilde{\rho}(0) = 0$. Consequently, using \eqref{munibd},
\begin{align*}
||\tilde{\rho}(t)||_\infty & \leq \int_0^t e^{\beta R(t-s)}(||(\nabla w \cdot  Q\ast\tilde{\mathbf{v}})(s) ||_\infty + ||(w\nabla \cdot Q\ast\tilde{\mathbf{v}})(s)||_\infty)\, ds\\
& \leq C \sup_{[0,T]} ||\tilde{\mathbf{v}}(t)||_\infty (e^{\beta Rt}-1),\quad t\in [0,T].
\end{align*}
Here we used the Sobolev embedding theorem to control $\nabla w$ along with \eqref{munibd}. Let $T_1^* = \min\{T,  \frac{\ln(1/2C + 1)}{\beta R}\}$, so that 
\begin{align}
\label{rhotvt}
\sup_{[0,T_1^*]} ||\tilde{\rho}(t)||_\infty \leq \frac{1}{2} \sup_{[0,T_1^*]} ||\tilde{\mathbf{v}}(t)||_\infty.
\end{align}
For $\tilde{u}$, we take the difference of \eqref{itersch}(ii) between $k+1$ and $k$ to obtain,
\begin{align*}
& \tilde{\mathbf{u}}_t + (\mathbf{v}\cdot\nabla) \tilde{\mathbf{u}} = -(\tilde{\mathbf{v}}\cdot\nabla ) \mathbf{v} + \tilde{w}(Q\ast \mathbf{v}- \mathbf{v}) + w(Q\ast\tilde{\mathbf{u}}-\tilde{\mathbf{u}}).
\end{align*}
Along the characteristic path $\{(t,x): \frac{dx}{dt} = \mathbf{v},\, x(0) = \alpha,\, \alpha\in\mathbb{R}^N \}$,
$$
\frac{d\tilde{\mathbf{u}}}{dt} = -(\tilde{\mathbf{v}}\cdot\nabla ) \mathbf{v} + \tilde{w}(Q\ast \mathbf{v}- \mathbf{v}) + w(Q\ast\tilde{\mathbf{u}}-\tilde{\mathbf{u}}),\quad \tilde{\mathbf{u}}(0) = 0.
$$
From this we obtain the following,
\begin{align}
\label{doubin}
-C(||\tilde{\mathbf{v}}(t)||_\infty + ||\tilde{w}(t)||_\infty)\mathbf{1} + w(Q\ast\tilde{\mathbf{u}}-\tilde{\mathbf{u}})\leq \frac{d\tilde{\mathbf{u}}}{dt}\leq C(||\tilde{\mathbf{v}}(t)||_\infty + ||\tilde{w}(t)||_\infty)\mathbf{1} + w(Q\ast\tilde{\mathbf{u}}-\tilde{\mathbf{u}}), 
\end{align}
where $\mathbf{1}$ is an $N\times 1$ column vector of ones.  In order to bound $\tilde{\mathbf{u}}$ we introduce an auxiliary problem  
\begin{align}
\label{etaaux}
\frac{d\eta}{dt} = C(||\tilde{\mathbf{v}}(t)||_\infty + ||\tilde{w}(t)||_\infty),\quad \eta(0) = 0.
\end{align}
This will allow us to prove 
\begin{align}
\label{ue}
-\eta(t)\mathbf{1} \leq \tilde{\mathbf{u}}(t) \leq \eta(t)\mathbf{1}, \quad t\in [0, T]. 
\end{align}
Based on this we see that 
\begin{align*}
||\tilde{\mathbf{u}}(t)||_\infty & \leq Ct\left(\sup_{[0,t]}||\tilde{\mathbf{v}}(s)||_\infty + \sup_{[0,t]}||\tilde{w}(s)||_\infty\right)\\
& \leq 2Ct  \max\left\{\sup_{[0,t]}||\tilde{\mathbf{v}}(s)||_\infty,\, \sup_{[0,t]}||\tilde{w}(s)||_\infty\right\} .
\end{align*}
Let $T_2^* = \min\{T, (4C)^{-1}\}$, we finally have,
\begin{align}
\label{utvtwt}
\sup_{[0,T_2^*]} ||\tilde{\mathbf{u}}(t)||_\infty\leq \frac{1}{2} \max\left\{\sup_{[0,T_2^*]}||\tilde{\mathbf{v}}(t)||_\infty,\, \sup_{[0,T_2^*]}||\tilde{w}(t)||_\infty\right\}. 
\end{align}
Combining this with \eqref{rhotvt}, we finally obtain \eqref{contrainfn} for $T^*=\min\{ T_1^*,T_2^*\}$.

Finally we return to prove (\ref{ue}). Taking difference of second inequality in \eqref{doubin} and \eqref{etaaux}$\mathbf{1}$, we obtain
$$
\frac{d(\tilde{\mathbf{u}}-\eta\mathbf{1})}{dt} \leq w(Q\ast\tilde{\mathbf{u}} - \tilde{\mathbf{u}}). 
$$
Using a substitution $\boldsymbol{\xi} = \tilde{\mathbf{u}} - \eta\mathbf{1}$ and the fact that $\eta$ is independent of the space variable, we have the following simple inequality,
\begin{align*}
\frac{d\boldsymbol\xi}{dt}\leq w(Q\ast\boldsymbol\xi -\boldsymbol\xi),
\end{align*}
with $\boldsymbol\xi (0) = \mathbf{0}$.
Using the same argument as in Proposition \ref{bdds}, we obtain that
\begin{align*}
\boldsymbol\xi(t,\cdot )\leq \sup\boldsymbol\xi (0)= 0.
\end{align*}
Hence,
\begin{align*}
\tilde{\mathbf{u}}(t)\leq\eta (t)\mathbf{1}, \ \forall t\geq 0.
\end{align*}
Similarly, by taking the sum of the first inequality in \eqref{doubin} and \eqref{etaaux}, and proceeding along the same line of arguments, we have
\begin{align*}
\tilde{\mathbf{u}}(t)\geq -\eta (t)\mathbf{1}, \ \forall t\geq 0.
\end{align*}
\end{proof}

\textbf{Step 3:} \textit{Higher Regularity of $(\rho ,\mathbf{u})$.} In this step, we show $D^1 \rho \in C([0,T];H^s(\mathbb{R}^N))$. Similar arguments will follow for $D^1 \mathbf{u}$.
Using Lemma \ref{intp} on Steps 1 and 2 with $f^k = \rho^k$, we have
\begin{align}
\label{ass1}
D^1 \rho^k\to D^1 \rho\text{ in } C([0,T];H^{s-1}(\mathbb{R}^N)). 
\end{align}

Next, let $\phi\in H^{-s}(\mathbb{R}^N)$ and $\{\phi_l\}_{l=1}^\infty \subseteq H^{-(s-1)}(\mathbb{R}^N)$ with $\phi^l\to\phi$ in $H^{-s}(\mathbb{R}^N)$. This is possible because $H^{-(s-1)}(\mathbb{R}^N)$ is dense in $H^{-s}(\mathbb{R}^N)$. Denoting $\langle\cdot ,\cdot\rangle$ as the pairing through $L^2(\mathbb{R}^N)$ inner product and for any $\epsilon >0$, we have, 
\begin{align*}
\left|\langle D^1 \rho^m (t)-D^1 \rho^n (t),\, \phi\rangle\right|& \leq \left|\langle D^1\rho^m(t)-D^1\rho^n(t),\, \phi-\phi^l\rangle\right| + \left|\langle D^1\rho^m(t)-D^1\rho^n(t),\, \phi^l\rangle\right|\\
& \leq ||D^1\rho^m(t)-D^1\rho^n(t)||_{H^s}||\phi - \phi_l||_{H^{-s}} + \left|\langle D^1\rho^m(t)-D^1\rho^n(t),\, \phi^l\rangle\right|\\
& \leq C ||\phi - \phi_l||_{H^{-s}}+ \left|\langle D^1\rho^m(t)-D^1\rho^n(t),\, \phi^l\rangle\right|\\
& \leq \epsilon + \left|\langle D^1\rho^m(t)-D^1\rho^n(t),\, \phi^l\rangle\right|,
\end{align*}
for sufficiently large $l$. Using \eqref{ass1} and letting $m,n\to\infty$, we obtain $\langle D^1\rho^k(t),\phi\rangle\to \langle D^1\rho(t),\phi\rangle$ uniformly in time. And since uniform limit of continuous functions is continuous, $D^1\rho\in C_w([0,T];H^s(\mathbb{R}))$, i.e., $D^1\rho(t)$ is continuous in weak $H^s$ topology.

Using this, we prove right continuity for the function $||\nabla\rho(t)||_{H^s}$ in $[0,T)$. Without loss of generality, we will show right continuity at $t=0$. From weak continuity, we have
$$
||\nabla\rho_{0}||_{H^s}\leq \liminf_{t\to 0^+} ||\nabla\rho(t)||_{H^s}.
$$ 
Also, Theorem \ref{hsenergyestimate} implies that $\nabla\rho,\nabla\mathbf{u}\in L^\infty([0,T];H^s(\mathbb{R}^N))$. Using this and a minor change in proof of Theorem \ref{hsenergyestimate}, we can have an energy estimate only for $\rho$ which gives,
$$
\limsup_{t\to 0^+} ||\nabla\rho(t)||_{H^s}\leq ||\nabla\rho_{0}||_{H^s}\, .
$$ 
Combining the two inequalities, we have right continuity. For continuity from the left in $(0,T]$, we consider the time reversed problem to \eqref{hypMain} by making the substitution $t\to T-t$. All the relevant arguments hold for the time reversed solution $(\rho(T-t,x),u(T-t,x))$. Therefore, $\nabla\rho ,\nabla\mathbf{u} \in C([0,T];H^s(\mathbb{R}^N))$.

\textbf{Step 4:} \textit{Uniqueness.} Let $(\Delta \rho,\Delta \mathbf{u}): = (\rho_1 - \rho_2, \mathbf{u_1} - \mathbf{u_2})$ with $\rho_1 ,\rho_2 ,\mathbf{u_1}, \mathbf{u_2}$ being solutions to \eqref{hypMainwic}. By following the same line of argument as in Step 2, it can be shown that for a sufficiently small $T>0$,
$$
\max\left\{  \sup_{[0,T]} ||\Delta\rho (t)||_\infty,\,   \sup_{[0,T]} ||\Delta \mathbf{u}(t)||_\infty  \right\}\leq  \ \frac{1}{2} \max\left\{  \sup_{[0,T]} ||\Delta\rho(t)||_\infty,\,  \sup_{[0,T]} ||\Delta \mathbf{u}(t)||_\infty  \right\}.
$$  
Therefore, $\rho_1 = \rho_2$ and $\mathbf{u_1} = \mathbf{u_2}$. This concludes the proof of Theorem \ref{local}.

\section*{Acknowledgement}
This research was supported in part by NSF grant DMS1812666. 

\bibliographystyle{abbrv}

\end{document}